\theoremstyle{plain}
\def\bord{\partial\Omega}
\def\dTTr{\operatorname{\dot{T}r}}
\def\TTr{\operatorname{Tr}}
\def\Tr{\operatorname{tr}}
\def\loc{\operatorname{loc}}
\def\B{\mathcal{B}}
\def\Hp{\B(\bord)}
\def\Hm{\B'(\bord)}
\def\Nscr{\mathscr{N}}
\def\Ascr{\mathscr{A}}
\def\del{\partial}
\def\eps{\varepsilon}
\def\R{\mathbb{R}}
\def\dx{{\rm d}x}
\def\dmu{{\rm d}\mu}
\def\cpct{\operatorname{Cap}}
\def\supp{\operatorname{supp}}
\newcommand{\ddny}[2]{\frac{\partial_{#2} #1}{\partial\nu}}
\newtheorem{proposition}{\textbf{Proposition}}
\newtheorem{corollary}{\textbf{Corollary}}
\newtheorem{remark}{\textbf{Remark}}
\newtheorem{theorem}{\textbf{Theorem}}
\newtheorem{lemma}{\textbf{Lemma}}
\newtheorem{definition}{\textbf{Definition}}
\author{
  {\normalsize Anna Rozanova-Pierrat}\thanks{CentraleSup\'elec, Universit\'e Paris-Saclay, France
    (correspondence, anna.rozanova-pierrat@centralesupelec.fr).}
		}
	\title{On analysis of problems of mathematical physics with non-Lipschitz boundaries}	
\begin{document}
\maketitle

\abstract{We review recent advances in solving problems of mathematical physics on domains with irregular boundaries in $\R^n$. We distinguish two frameworks: a measure-free approach in the image of the trace operator spaces for extension domains and an $L^2$-approach depending on a $d$-upper regular boundary measure. In both cases, the domains can have boundaries with different Hausdorff dimensions inside the interval $(n-2,n)$. The generalization of the Poincaré-Steklov/Dirichlet-to-Neumann operator for these two contexts is given. To illustrate the established convergence of spectral problems for elliptic operators with Robin boundary conditions, we give a numerical example of the stability of localized eigenfunctions, using results of M. Graffin.}

\begin{keywords}
fractal and non-Lipschitz boundaries, trace operator, Poincaré-Steklov or Dirichlet-to-Neumann operator, spectral stability, Robin boundary conditions.
\end{keywords}

\section{Introduction}
\label{sec:1}
Partial differential equations (PDEs) model the problems of mathematical physics. Geometrical mathematical surfaces or sets idealistically model the geometrical forms of real objects, where the different physical processes happen. Examples of such abstract mathematical geometrical models are planes, disques, spheres, cubes, and fractals~\cite{MANDELBROT-1967,MANDELBROT-1983}. There are several examples of followed works~\cite{BARDOS-2016,CAPITANELLI-2010,CAPITANELLI-2011,EVEN-1999,FLECKINGER-1995,GYRYA-2011,HINZ-2018,HINZ-2020,LAPIDUS-1996,ROZANOVA-PIERRAT-2012,ROZANOVA-PIERRAT-2025,SAPOVAL-1993,VAN_DEN_BERG-2000}.
 Fractal models naturally describe objects with a multi-scaling structure iterated up to infinity. This makes the fractal boundaries the most efficient in the heat exchanges~\cite{BARDOS-2016,DE_GENNES-1982,ROZANOVA-PIERRAT-2012,ROZANOVA-PIERRAT-2025}, optimal in the wave absorbtion~\cite{HINZ-2021-1,ROZANOVA-2020} and the most stable structures under loads~\cite{HINZ-2021}.

Historically, the theory of PDEs has developed on ``regular'' or ``sufficiently regular'' geometrical models of domain boundaries, which preserve useful properties of the corresponding solutions (see Section~\ref{sec:2} for the concepts and discussion). Typically, fractal boundaries were not initially considered and, up to now, are more understood as ``irregular'' shapes. 
This review starts with a discussion of what a ``regular'' and ``irregular'' boundary is. One of the properties that makes this difference natural is the existence of the normal vector and normal derivative on the boundary. We show that the notion of normal derivative, like many other classical notions, can be in a natural ``weak'' way generalized in a class of domains, containing the regular and irregular boundary shapes, without knowledge of the boundary measure. In this case, the functional spaces are based on the image of the trace operator, denoted in what follows by $\B(\del \Omega)$. In the framework of $\B(\del \Omega)$ and also its dual space $\B'(\del \Omega)$, all constructions on them are `` boundary measure free'' (see Sections~\ref{SubSecspaceB} and~\ref{SSPoincB}). In this framework, it is natural to work with Dirichlet or Neumann-type boundary conditions.

For Robin-type boundary conditions, it is more natural to consider an $L^2$-boundary framework where the boundary measure becomes crucial. It could also be viewed as a complementary approach, which is more natural for the regular boundaries. We show the generalization of the classical results on the regular boundary with the Hausdorff $(n-1)$-dimensional measure to the abstract boundary measures, which can be defined on multi-fractal structures. In this general framework, we define ``weakly regular'' shapes for which the trace operator is well-defined, which are called in different references as ``admissible domains'' with slight modifications in the definitions~\cite{ARFI-2019,CLARET-2025,DEKKERS-2022,HINZ-2021-1,HINZ-2023,ROZANOVA-PIERRAT-2021}. In this review, we do not give a particular name; each time, we describe the class of domains we are working on explicitly. 

Once the PDE problem in the irregular case is handled (its well-posedness is proven), there is a natural question about its approximation by the same type of problem on a regular domain. Theoretically, it is directly related to the convergence of a regular sequence of domains to an irregular one and the convergence of the corresponding energy and variational forms with the weak solutions (for instance, the Mosco convergence~\cite{MOSCO-1969} and also various publications as~\cite{CAPITANELLI-2010,CAPITANELLI-2011,DEKKERS-2022-1,DEKKERS-2022,HINZ-2023,LANCIA-2014}). Numerically, it is also related to discrete approximations of the boundary measures~\cite{Chandler-Wilde-2021,JOLY-2024}. The related question is shape optimization: Is it possible to find at least one optimal shape by realizing the minimum or maximum of a chosen energy criterion? For the convergence of solutions and other different operators constructed on them, it is crucial to consider ``a correct'' domain convergence, which could imply the desired approximation.

In this review, we introduce these different concepts compared to the classical regular case, present several recent results, and orient the reader to useful references. 

In Section~\ref{sec:2}, we introduce the trace operator for extension domains by developing first the ``measure-free'' framework in Subsection~\ref{SubSecspaceB} and then explaining the measure boundary framework in  Subsection~\ref{SSMeasureBF}. Based on these two contexts, we then introduce the normal derivative and the Dirichlet-to-Neumann/Poincaré-Steklov, studied in detail in Subsections~\ref{SSPoincB} and~\ref{SSPoincarL2} respectively, where the known results are developed for the general context of Section~\ref{sec:2}. In Section~\ref{SecApprox}, we discuss briefly the approximation of PDE problems on domains with irregular boundaries, focusing on Robin boundary value problems. In particular, we present a numerical illustration of the spectral stability~\cite[Theorem~6]{HINZ-2023} obtained in~\cite{GRAFFIN-2025}. Different standard definitions and notions for the reader's convenience are recalled in Section~\ref{Append}.

\section{From the trace theory to the solutions of elliptic boundary-value and transmission problems}
\label{sec:2}
Let $\Omega\subset \R^n$ be a connected open set, called \textit{a domain}, and let it be a bounded domain with a compact boundary $\del \Omega$. If $\del \Omega$ is Lipschitz~\cite[Definition~2.4.5]{HENROT-2005} or $C^1$ boundary~\cite[p.~626]{EVANS-2010} then there exists a normal vector $\nu$ to the boundary a.e. or everywhere, correspodingly, for the $(n-1)$-dimensional Lebesgue measure $\lambda^{n-1}$ on $\del \Omega$ (which is finite). Typically, $\nu(x)$ for $x\in \del \Omega$ is an element of $L^\infty(\del \Omega,\lambda^{n-1})$ often viewed as an element of $L^2(\del \Omega,\lambda^{n-1})$ or of more regular Sobolev space for sufficiently regular boundaries.
In this regular framework, the normal derivative on the boundary is understood as the projection (in $\R^n$) to the normal vector $\nu$ of the gradient of $u\in C^1(\del \Omega)$: $\frac{\del u}{\del \nu}= \nabla u\cdot \nu$. This quantity is an element of $L^2(\del \Omega, \mathcal{H}^{n-1})$, where $\mathcal{H}^{n-1}$ is the $(n-1)$-dimensional Hausdorff measure on the boundary, once $$u\in H^2(\Omega)=\{v\in L^2(\Omega)| \, \del_{x_i} v\in L^2(\Omega) \hbox{ and } \del^2_{x_i x_j}v \in L^2(\Omega) \hbox{ for all } i,j=1,\ldots,n\}.$$
The high interest to have $H^2$-regularity for the weak solutions of the elliptic boundary value problems is the access to various very confortable properties as an easy control of norms, working in the domain of the Laplacian and having for $n=2$ and a bounded $\Omega$ the Sobolev embedding $H^2(\Omega)\subset C(\overline{\Omega})$, which for instance, very useful for the well-posedness questions of non-linear models (for an example see~\cite{KALTENBACHER-2011-1} and the related references; for non-Lipschitz case see~\cite{DEKKERS-2022-1,DEKKERS-2022}). The main classical approaches and constructions fairly use this $H^2$-regularity and the existence of the normal vector on a boundary. 
Let us consider the example of the simplest elliptic problem for the Poisson equation with the homogeneous Dirichlet boundary condition
\begin{equation}~\label{EqPoisD}
	-\Delta u= f\in L^2(\Omega),\quad u|_{\del \Omega}=0
\end{equation}
on a bounded domain $\Omega$, and denote $H^1_0(\Omega)=\overline{C^\infty_0(\Omega)}^{H^1(\Omega)}$, defined as the closure in $H^1(\Omega)$ of the set of infinitely differentiable functions with compact support in $\Omega$, the Hilbert space with its usual norm $\|u\|_{H^1_0(\Omega)}=\|\nabla u\|_{[L^2(\Omega)]^n}.$ Problem~\ref{EqPoisD} is weakly well-posed for an arbitrary $\Omega$ by the Riesz' representation theorem: for all $f\in L^2(\Omega)$ there exists an unique solution $u\in H^1_0(\Omega)$ such that $(\nabla u,\nabla v)_{L^2(\Omega)}=(f,v)_{L^2(\Omega)}$ for all $v\in H^1_0(\Omega)$.
It is classicaly known~\cite[Theorem~2 p.~304 and Theorem~3 p.~316]{EVANS-2010} that $C^1$ boundary regularity provides the $H^2(\Omega)$ regularity of the weak solutions of~\eqref{EqPoisD}. However, it is no longer the case for a Lipschitz boundary with incoming corners (a non-convex domain $\Omega$~\cite{GRISVARD-1985}). Moreover, let us take the source term $f\in C^\infty_0(\Omega)$, then, from one hand, $u\in C^\infty(\Omega)\cap H^1_0(\Omega)$ for an arbitrary $\Omega$, but, from other hand, the $H^2$-property still depends on the regularity of the boundary~\cite{EDMUNDS-1987, EVANS-2010}, as it is called ``global'' or ``up to the boundary'' regularity of the solution. In~\cite{NYSTROM-1994} it is shown that for von Koch's snowflake $\Omega$ the weak solution $u\in H^1_0(\Omega)\cap C^{\infty}(\Omega)$ of~\eqref{EqPoisD} for all $f\in \mathcal{D}(\Omega)$  non negative and non identically zero is continuous up to the boundary $u\in C(\overline{\Omega})$, but $u\notin H^2(\Omega).$  

Suppose the boundary is non-Lipschitz with the Hausdorff dimension $d\in (n-1,n)$. In that case, the notion of the normal vector to the boundary does not exist, and all main definitions, as normal and tangential derivatives, must be redefined in a weak sense. To consider the problems of mathematical physics on domains with fractal and, more generally, non-Lipschitz boundaries, it is crucial first to define the trace operator on such an irregular boundary. We explain all notions for $H^1(\Omega)$ for simplicity. For more general spaces,  see, for instance~\cite{BIEGERT-2009,HINZ-2023,JONSSON-2009,ROZANOVA-PIERRAT-2021}. 
\subsection{Image of the trace or ``measure free'' framework}\label{SubSecspaceB}
As usual, we write $H^1(\Omega)$ for the Hilbert space of all $u\in L^2(\Omega)=L^2(\Omega,\R)$ such that $\nabla u \in L^2(\Omega,\R^n)$ and having the scalar product 
\begin{equation}\label{E:sp}
\left\langle u,v\right\rangle_{H^1(\Omega)}=\int_\Omega \nabla u\nabla v~\dx+\int_\Omega uv~\dx;
\end{equation}
here $\nabla u$ is interpreted in the distributional sense. The Hilbert space norms induced by \eqref{E:sp} is denoted by $\|\cdot\|_{H^1(\Omega)}$.

Let us define the classe of $H^1$-extension domains~\cite{HAJLASZ-2008,JONES-1981}:
\begin{definition}[$H^1$-extension domains]
A domain $\Omega\subset\R^n$ is called an \textit{$H^1$-extension domain}, if there is a bounded linear extension operator $\mathrm{E}_{\Omega}:H^1(\Omega)\to H^1(\R^n)$. 
It means that for all $u\in H^1(\Omega)$, $Eu|_{\Omega}=u$ and there exists a constant $C>0$ depending only on $n$ and $\Omega$ such that $\|Eu\|_{H^1(\R^n)}\le C \|u\|_{H^1(\Omega)}$.
\end{definition}
By Calderon-Stein~\cite{CALDERON-1961,STEIN-1970} any Lipschitz domain is an example of $H^1$-extension domains. By Jones~\cite{JONES-1981}, any $(\eps,\delta)$-domain (this notion, defined in Section~\ref{Append}, Definition~\ref{DefUnifD}, contains not only Lipschitz domains, but also domains with von Koch fractal boundaries, or more generally, $d$-set boundaries with $d\in (n-1,n)$~\cite{WALLIN-1991}, for the definition see Section~\ref{Append}, Definition~\ref{defdset}) is also an $H^1$-extension domain. Thanks to~\cite{HAJLASZ-2008} it is known that the optimal class of $H^1$-extension domains of $\R^n$ is described by two conditions:
\begin{enumerate}
	\item $\Omega$ is an $n$-set, or satisfy the density measure condition~\cite{HAJLASZ-2008}:
	 $$\exists c>0\quad \forall x\in \overline{\Omega}, \; \forall r\in (0,1] \quad \lambda(B_r(x)\cap \Omega)\ge C\lambda(B_r(x))=cr^n,$$
 where $\lambda(B_r(x))$ denotes the Lebesgue measure of the ball $B_r(x)=\{y\in \R^n|\, |x-y|<r\}$ in $\R^n$.  Therefore, an $n$-set
$\Omega$ cannot be ``thin'' close to its boundary $\del \Omega$, since it must always contain a non-trivial ball in its neighborhood.
	\item $H^1(\Omega)=C_2^1(\Omega)$ in the sense of equivalent norms, where $C_2^1(\Omega)$ is the space of the fractional sharp maximal functions~\cite{HAJLASZ-2008,ROZANOVA-PIERRAT-2021}.
	\end{enumerate}
	\begin{remark}
The density measure condition thus excludes domains with outgoing cusps and fractal tree structures, as in~\cite{ACHDOU-2013}, from our consideration. At the same time, the Hausdorff boundary dimension of an $H^1$-extension domain can vary in $[n-1,n)$. 
\end{remark}

Classical trace operator construction on a regular boundary is the following~\cite{EVANS-2010}: let $u\in H^1(\Omega)\cap C(\overline{\Omega})$, then its trace on $\del \Omega$ it is the values of its restriction on $\del \Omega$, $u|_{\del \Omega}$. Then, as $C(\overline{\Omega})$ is dense in $H^1(\Omega)$ and $C(\del \Omega)$ is dense in $L^2(\del \Omega,\mathcal{H}^{n-1})$, we result in a bounded continuous operator $\gamma: H^1(\Omega)\to   L^2(\del \Omega,\mathcal{H}^{n-1})$. This type of construction (see~subsection~\ref{SSMeasureBF}) is still valid for  $H^1$-extension domains once a boundary measure is fixed. 

Here, we explain how to introduce the trace operator without specifying the type of boundary measure. The construction follows~\cite{BIEGERT-2009} and holds in Lipschitz and non-Lipschitz cases~\cite{CLARET-2025,HINZ-2023}. 

We use the notions of capacity with respect to the space $H^1(\R^n)$ (see Definition~\ref{DefCap} \cite[Section 2.3]{CHEN-FUKUSHIMA-2012}, \cite[Section 2.1]{FOT94}  or \cite{ADAMS-1996,MAZ'JA-1985}). 
As it is known, a set of zero capacity has zero Lebesgue measure (we denote it by $\lambda^n$). 
A property which holds outside a set of zero capacity is said to hold \emph{quasi everywhere} (\emph{q.e.}).
Let $f\in L^1_{loc}(\R^n)$, the limit
\begin{equation*}\label{E:redefinition}
\widetilde{f}(x)=\lim_{r\to 0}\frac{1}{\lambda^n(\B(x,r))}\int_{\B(x,r)}f(y)dy
\end{equation*}
exists at $\lambda^n$-a.e. $x\in \mathbb{R}^n$. 
If $f\in H^1(\R^n)$, then it exists at $H^{1}(\mathbb{R}^n)$-quasi every $x\in\mathbb{R}^n$ and $\widetilde{f}$ defines a $H^{1}(\mathbb{R}^n)$-quasi-continuous version (representative) of $f$, \cite[Theorem 6.2.1]{ADAMS-1996}, \cite[Theorem 2.3.4]{CHEN-FUKUSHIMA-2012}, \cite[Theorem 2.1.3]{FOT94} or \cite{MAZ'JA-1985} (for the definition of a quasi-continuous function see~\ref{Append}).  Two quasi continuous representatives of the same element $f$ of $H^1(\mathbb{R}^n)$ agree q.e. on $\mathbb{R}^n$, see \cite[p. 71]{FOT94} or \cite[Theorem 6.1.4]{ADAMS-1996}. 
 \begin{definition}[Trace operator~\cite{CLARET-2025}]
 Let $\Omega$ be an $H^1$-extension domain with a boundary $\del \Omega$ of positive capacity.	By $\mathcal{B}(\partial\Omega)$ is denoted the vector space of all q.e. equivalence classes of pointwise restrictions $\widetilde{w}|_{\partial\Omega}$ of quasi-continuous representatives $\widetilde{w}$ of classes $w\in H^1(\R^n)$. Given $u\in H^1(\Omega)$, we choose an arbitrary element $w$ of $H^1(\mathbb{R}^n)$ such that $w=u$ a.e. in $\Omega$ and define the trace of $u$ on $\del \Omega$ by $\TTr u:=\widetilde{w}|_{\partial\Omega}$,  where $\widetilde{w}$ is an arbitrary quasi continuous representative $\widetilde{w}$ of $w$. 
 This defines
 the trace operator $\TTr u : \, H^1(\Omega) \to \mathcal{B}(\partial\Omega)$ by \begin{equation}\label{EqTraceDef}
                                                                                 	\TTr u=(\mathrm E_\Omega u)^\sim|_{\partial\Omega},
                                                                                 \end{equation}
 for any bounded linear extension operator $\mathrm{E}_\Omega:H^1(\Omega)\to H^1(\R^n)$.
 \end{definition}
As pointed in~\cite[Proposition~2.1]{CLARET-2025}, by \cite[Theorem 6.1 and Remark 6.2]{BIEGERT-2009} and  \cite[Corollary 6.3]{BIEGERT-2009}, the space $\mathcal{B}(\partial\Omega)$ defines the image of the trace operator,  $\TTr(H^1(\Omega))=\mathcal{B}(\partial\Omega)$:
\begin{proposition}\label{P:trace}
Let $\Omega$ be an $H^1$-extension domain with a boundary $\del \Omega$ of positive capacity. Then $u\mapsto \TTr u$ is a linear surjection $\TTr:H^1(\Omega)\to \mathcal{B}(\partial\Omega)$, well-defined in the sense that given $u\in H^1(\Omega)$, its trace $\TTr u$ on $\partial\Omega$ does not depend on the particular choice of $w$ or $\widetilde{w}$.  Moreover, definition \eqref{EqTraceDef} 
does not depend on the choice of a bounded linear extension operator $\mathrm E_\Omega: H^1(\Omega)\to H^1(\R^n)$.
\end{proposition}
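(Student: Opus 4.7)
My plan is to first isolate the real content, namely that $\TTr u$ is an unambiguous element of $\mathcal{B}(\partial\Omega)$; once this is settled, linearity and surjectivity are essentially formal. The technical engine I would invoke is the result from \cite{BIEGERT-2009} (specifically Theorem~6.1 and Corollary~6.3, already cited just above the statement) which, for the class of $H^1$-extension domains, guarantees that if $w_1,w_2\in H^1(\R^n)$ agree $\lambda^n$-a.e.\ in $\Omega$, then their quasi-continuous representatives $\widetilde w_1,\widetilde w_2$ agree q.e.\ on $\partial\Omega$. This is the non-trivial geometric input: it is here that the extension property of $\Omega$, and hence the density measure condition, plays its role.

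Given this, well-definedness proceeds in two substeps. First, for a fixed $w\in H^1(\R^n)$, any two quasi-continuous representatives of $w$ coincide q.e.\ on $\R^n$ and so in particular q.e.\ on $\partial\Omega$, producing the same element of $\mathcal{B}(\partial\Omega)$. Second, for two candidates $w_1,w_2\in H^1(\R^n)$ both extending the same $u\in H^1(\Omega)$, I would apply the cited Biegert result to $w_1-w_2$, which vanishes $\lambda^n$-a.e.\ in $\Omega$, to conclude $\widetilde w_1=\widetilde w_2$ q.e.\ on $\partial\Omega$. Independence from the choice of extension operator $\mathrm{E}_\Omega$ is then the special case in which $w_1,w_2$ are the images of $u$ under two distinct bounded linear extension operators.

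Linearity of $\TTr$ follows by fixing an extension operator, noting that $\mathrm{E}_\Omega(\alpha u+\beta v)$ and $\alpha\,\mathrm{E}_\Omega u+\beta\,\mathrm{E}_\Omega v$ are both extensions of $\alpha u+\beta v$ to $\R^n$, and then appealing to the just-established well-definedness. Surjectivity is built into the definition of $\mathcal{B}(\partial\Omega)$: any element is by construction $\widetilde w|_{\partial\Omega}$ for some $w\in H^1(\R^n)$, and setting $u:=w|_\Omega\in H^1(\Omega)$ gives $\TTr u=\widetilde w|_{\partial\Omega}$ by using $w$ itself as an admissible choice in \eqref{EqTraceDef}.

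The main obstacle is the well-definedness step, because without the Biegert lemma the construction would depend on the arbitrary choice of $w$, and hence on the extension operator, rendering the whole definition empty. Once that lemma is accepted, the remainder of the proof is essentially bookkeeping around q.e.\ equivalence classes.
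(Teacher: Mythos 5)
Your proposal is correct and follows essentially the same route as the paper, which establishes this proposition by appeal to \cite[Proposition~2.1]{CLARET-2025} and the results \cite[Theorem 6.1, Remark 6.2, Corollary 6.3]{BIEGERT-2009} that you invoke as the technical engine. You correctly identify that the only non-formal content is the q.e.\ agreement on $\partial\Omega$ of quasi-continuous representatives of extensions agreeing a.e.\ in $\Omega$ (where the extension-domain/density hypothesis enters), with well-definedness, linearity, surjectivity and operator-independence then following by the bookkeeping you describe.
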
 
We also consider~\cite{CLARET-2025} the image of the trace operator $\mathcal{B}(\partial\Omega)$ as a Hilbert space:
\begin{proposition}
Let $\Omega\subset \R^n$ be an $H^1$-extension domain with a boundary $\del \Omega$ of positive capacity. Then, endowed with the quotient norm
\begin{equation}\label{EqTrNorm}
\left\|f\right\|_{\mathcal{B}(\partial\Omega)}:=\min\{ \left\|v\right\|_{H^1(\Omega)}\big|\ \text{$v\in H^1(\Omega)$ and $\dTTr\:v=f$}\},	
\end{equation}
the space $\mathcal{B}(\partial\Omega)$ is a Hilbert space. 
\end{proposition}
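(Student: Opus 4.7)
The plan is to realize $\mathcal{B}(\partial\Omega)$ as a quotient Hilbert space. By Proposition~\ref{P:trace}, the trace map $\TTr\colon H^1(\Omega)\to\mathcal{B}(\partial\Omega)$ is a linear surjection, so $\mathcal{B}(\partial\Omega)$ is in canonical linear bijection with the algebraic quotient $H^1(\Omega)/N$, where $N:=\ker\TTr$. Since $H^1(\Omega)$ is Hilbert, once $N$ is shown to be a closed subspace, both $N$ and its orthogonal complement $N^\perp$ inherit Hilbert structures, and $N^\perp$ is isometrically isomorphic to $H^1(\Omega)/N$ endowed with the usual quotient norm $\|[u]\|=\inf_{w\in N}\|u+w\|_{H^1(\Omega)}$. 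It will then remain only to verify that this quotient norm coincides with \eqref{EqTrNorm} and that the infimum is attained as a minimum.

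The single non-routine step is therefore the closedness of $N$ in $H^1(\Omega)$, and this will be the main obstacle. To handle it I would invoke the classical capacity-theoretic fact already used in the construction of the trace: if $w_k\to w$ in $H^1(\R^n)$, then a subsequence of the quasi-continuous representatives $\widetilde w_k$ converges to $\widetilde w$ quasi everywhere on $\R^n$ (see \cite{ADAMS-1996,FOT94,MAZ'JA-1985}). Given $u_k\to u$ in $H^1(\Omega)$ with $\TTr u_k=0$ for every $k$, I fix a bounded linear extension $\mathrm{E}_\Omega$; then $w_k:=\mathrm{E}_\Omega u_k\to w:=\mathrm{E}_\Omega u$ in $H^1(\R^n)$, hence along a subsequence $\widetilde w_k\to\widetilde w$ q.e.\ on $\R^n$, and in particular q.e.\ on $\partial\Omega$. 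Since each restriction $\widetilde w_k|_{\partial\Omega}$ vanishes q.e., so does the limit, giving $\TTr u=0$ and $u\in N$. Proposition~\ref{P:trace} ensures that this conclusion is independent of the chosen extension and representative.

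With $N$ closed, I equip $\mathcal{B}(\partial\Omega)$ with the inner product obtained by transporting the one on $N^\perp$: for $f,g\in\mathcal{B}(\partial\Omega)$, choose representatives $u,v\in H^1(\Omega)$ with $\TTr u=f$, $\TTr v=g$ and set $\langle f,g\rangle_{\mathcal{B}(\partial\Omega)}:=\langle P_{N^\perp}u,P_{N^\perp}v\rangle_{H^1(\Omega)}$, where $P_{N^\perp}$ denotes the orthogonal projector onto $N^\perp$. Independence of the choice of $u,v$ is immediate, since any two representatives differ by an element of $N$. The associated norm is $\|P_{N^\perp}u\|_{H^1(\Omega)}=\min_{w\in N}\|u+w\|_{H^1(\Omega)}=\min\{\|v\|_{H^1(\Omega)}\colon\TTr v=f\}$, which is exactly \eqref{EqTrNorm}, and the minimum is attained at the unique element $P_{N^\perp}u$ of the affine subspace $u+N$ of smallest norm. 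Completeness of $\mathcal{B}(\partial\Omega)$ is then inherited from that of the closed subspace $N^\perp\subset H^1(\Omega)$, completing the argument.
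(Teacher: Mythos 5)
Your proof is correct and follows essentially the same route as the paper: quotient $H^1(\Omega)$ by the closed kernel of $\TTr$, transport the Hilbert structure from its orthogonal complement (which is exactly the space $V_1(\Omega)$ of $1$-harmonic functions in \eqref{V1}), and observe that the orthogonal projection onto that complement realizes the minimum in \eqref{EqTrNorm}. The only minor difference is that you establish closedness of $\ker\TTr$ directly via q.e.\ convergence of quasi-continuous representatives along a subsequence, whereas the paper obtains it from the identification $\ker\TTr=H^1_0(\Omega)$ (Theorem~\ref{Trisom}(i)) and identifies the minimizer as the unique weak solution $u^f$ of the Dirichlet problem \eqref{DO}.
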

\begin{remark}
	Usually, such norms are defined by taking the infinum (cf.~\cite{HINZ-2021,HINZ-2023}), but here we draw attention to the fact that the infinum is realized and hence it is a minimum. Indeed, it is the direct corollary of the weak well-posedness of the Dirichlet problem for $-\Delta+1$ for the boundary data $f\in \Hp$,\cite{CLARET-2025}:  
	\begin{equation}\label{DO}
\begin{cases}
-\Delta u+u&=0\quad \text{in $\Omega$},\\
\qquad u|_{\partial\Omega}&=f.
\end{cases}
\end{equation}
\end{remark}


Given $f\in \Hp$, an element $u$ of $H^1(\Omega)$ is called a \emph{weak solution} of the Dirichlet problem \eqref{DO}
if $\TTr_i u=f$ and $\left\langle u,v\right\rangle_{H^1(\Omega)}=0$ for all $v\in H^1_0(\Omega)$.
Let $V_1(\Omega)$ denote the orthogonal complement of $H^1_0(\Omega)$ in $H^1(\Omega)$,
\begin{equation}\label{V1}
H^1(\Omega)=H^1_0(\Omega)\oplus V_1(\Omega).
\end{equation}
For any $f\in \Hp$, the Dirichlet problem \eqref{DO} has a unique weak solution $u^f$; this is immediate from \eqref{V1} or also from Stampacchia's theorem. By the first line in \eqref{DO}, a solution $u^f$ is called \emph{$1$-harmonic} on $\Omega$. It is in $V_1(\Omega)$, which is the space of all elements of $H^1(\Omega)$ that are $1$-harmonic on $\Omega$. Therefore, it is natural to consider Dirichlet $1$-harmonique extension operator $E_D=(\dTTr|_{V_1(\Omega)})^{-1}:f\in \Hp \mapsto u^f\in V_1(\Omega)$, which is linear and surjective isometry. The restriction $\dTTr|_{V_1(\Omega)}: V_1(\Omega)\to  \Hp$ is the inverse of $E_D$, as also isometry onto~\cite{CLARET-2025}.
\begin{remark}
	If $\Omega$ is a bounded Lipschitz domain, then, up to equivalent norms, $\Hp$ equals $H^{1/2}(\partial\Omega)$,~\cite[Chapter IV, Appendix]{DUTRAY-LIONS1988}. The space $H^{1/2}(\partial\Omega)$ can be endowed with an explicit norm of fractional Sobolev type; this norm involves the surface measure $\sigma$ on $\partial\Omega$. Going to works~\cite{JONSSON-1994,JONSSON-1984}, if $\partial\Omega$ is the support of a measure $\mu$ satisfying certain scaling conditions (see also Section~\ref{SSMeasureBF}), then the trace space $\Hp$ is a Besov type space. It can be endowed with an explicit norm involving $\mu$, see \cite{JONSSON-1994,JONSSON-1984}. Here we do not specify any measure on $\partial\Omega$ and do not impose the condition to be Lipschitz. Moreover, $\partial\Omega$ may have parts of different Hausdorff dimensions. The present formulation is ``measure-free'' and does not give any explicit norm representation for $\Hp$ using the minimal assumptions. 
\end{remark}
 
We summarize the trace operator properties in the following theorem (see also~\cite[Theorem~2.9]{CLARET-2025}): 

\begin{theorem}\label{Trisom}
Let $\Omega\subset \R^n$ be an $H^1$-extension domain with $\del \Omega$ of positive capacity. Then there hold the following properties of the trace operator $\TTr$ on $\del \Omega$:
\begin{enumerate}
\item[(i)] The  kernel of $\TTr$ is equal to $H^1_0(\Omega)=\ker\TTr=\{v\in H^1(\Omega)|\, \TTr v=0\}$.
\item[(ii)]  The image $\operatorname{Im}\TTr= \Hp$ is a Hilbert space endowed with norm~\eqref{EqTrNorm}. 
\item[(iii)] The trace operator $\TTr: H^1(\Omega)\to\Hp$ is a linear bounded with the operator norm $\|\TTr\|_{\mathcal{L}(H^1(\Omega),\Hp)}=1$.  Its restriction to the space of $1$-harmonic functions $\dTTr|_{V_1(\Omega)}:V_1(\Omega)\to \mathcal{B}(\partial\Omega)$  is an isometry and onto ($i.e.$ a bijection).
\end{enumerate}
\end{theorem}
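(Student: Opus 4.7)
The plan is to assemble the three items from results already at hand in the excerpt, together with the standard potential-theoretic characterization of $H^1_0(\Omega)$. Items (ii) and (iii) are essentially corollaries of Proposition~\ref{P:trace} and the Dirichlet solvability discussion preceding the statement, while (i) requires an independent argument based on quasi-continuous representatives. For (i), the inclusion $H^1_0(\Omega)\subset \ker\TTr$ follows by approximation: given $u\in H^1_0(\Omega)$, choose $\varphi_k\in C^\infty_0(\Omega)$ with $\varphi_k\to u$ in $H^1(\Omega)$; their extensions by zero $\tilde\varphi_k$ converge in $H^1(\R^n)$ to some $w$ agreeing with $u$ a.e.\ in $\Omega$, and since each $\tilde\varphi_k$ vanishes on a neighbourhood of $\del\Omega$, a subsequence of their quasi-continuous representatives converges quasi-everywhere to $\tilde w$, forcing $\tilde w|_{\del\Omega}=0$ q.e., hence $\TTr u=0$. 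For the reverse inclusion, if $\TTr u=0$ and $w=\mathrm{E}_\Omega u\in H^1(\R^n)$, then $\tilde w=0$ q.e.\ on $\del\Omega$, which by the classical Adams--Hedberg characterization yields that the restriction of $w$ to $\Omega$ belongs to $H^1_0(\Omega)$.

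For (ii), the surjectivity $\operatorname{Im}\TTr=\B(\del\Omega)$ is exactly Proposition~\ref{P:trace}, and the Hilbert space structure with the quotient norm~\eqref{EqTrNorm} is the content of the proposition immediately preceding the statement; the fact that the infimum is actually attained follows from the orthogonal decomposition~\eqref{V1}, because for every $f\in\B(\del\Omega)$ and every preimage $v=E_D f+v_0$ with $v_0\in H^1_0(\Omega)$ one has $\|v\|^2_{H^1(\Omega)}=\|E_Df\|^2_{H^1(\Omega)}+\|v_0\|^2_{H^1(\Omega)}$, so the minimum is realised at $u^f=E_Df\in V_1(\Omega)$. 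For (iii), linearity is immediate from~\eqref{EqTraceDef}; the bound $\|\TTr u\|_{\B(\del\Omega)}\le\|u\|_{H^1(\Omega)}$ is a direct consequence of the quotient-norm definition, so $\|\TTr\|\le 1$. Equality and the isometric restriction to $V_1(\Omega)$ are two faces of the same statement: since $E_D:\B(\del\Omega)\to V_1(\Omega)$ is a linear surjective isometry by construction and $\dTTr|_{V_1(\Omega)}\circ E_D=\mathrm{Id}_{\B(\del\Omega)}$, the restriction $\dTTr|_{V_1(\Omega)}$ is its inverse, hence itself a surjective isometry; this in particular exhibits unit-norm vectors on which $\TTr$ preserves norm, forcing $\|\TTr\|=1$.

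The main obstacle is the reverse inclusion in (i), which hinges on the nontrivial fact that an element of $H^1(\R^n)$ whose quasi-continuous representative vanishes q.e.\ on $\del\Omega$ restricts to a function in $H^1_0(\Omega)$. This is where the $H^1$-extension property of $\Omega$ and the positivity of the capacity of $\del\Omega$ jointly enter; once this is in place, items (ii) and (iii) amount to careful bookkeeping on the isometric property of $E_D$ and the orthogonal decomposition $H^1(\Omega)=H^1_0(\Omega)\oplus V_1(\Omega)$ stated in~\eqref{V1}.
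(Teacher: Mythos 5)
Your proposal is correct and follows essentially the same route as the paper, which assembles Theorem~\ref{Trisom} from Proposition~\ref{P:trace}, the quotient-norm proposition, and the orthogonal decomposition~\eqref{V1}, delegating the kernel identification to the cited references. Your sketch of (i) is the standard argument used there; note only that the Adams--Hedberg/spectral-synthesis characterization is usually phrased for vanishing q.e.\ on the whole complement, so the reverse inclusion needs the intermediate fact that $w\mathbf{1}_\Omega\in H^1(\R^n)$ whenever $\widetilde w=0$ q.e.\ on $\partial\Omega$.
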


\begin{remark}
	Generally, this trace construction allows us to define the linear bounded trace operator $\mathrm{Tr}_\Gamma: H^1(\R^n)\to \mathcal{B}(\Gamma)$ for all closed $\Gamma\subset \R^n$~\cite{HINZ-2021-1,HINZ-2023}. It corresponds to the well-posedness of the following Dirichlet boundary value problem for $f\in \mathcal{B}(\Gamma)$:
	\begin{equation}\label{Eq:Dir-1H-Prob}
\mbox{find } u\in H^1(\R^n) \mbox{ such that } (\Delta-1) u = 0 \mbox{ in } \R^n\setminus \Gamma \mbox{ and } \TTr_\Gamma u = f.
\end{equation}
With definition $\mathrm{tr}_\Gamma:=\mathrm{Tr}_\Gamma|_{V_1(\R^n\setminus \Gamma)}$, the operator  $\mathrm{tr}_\Gamma: V_1(\R^n\setminus \Gamma)\to \mathcal{B}(\Gamma)$ is a unitary isomorphism. Here, we have also used the orthogonal decomposition $H^1(\R^n)=H^1_0(\R^n\setminus \Gamma)\oplus V_1(\R^n\setminus \Gamma)$, taking in mind that $H^1_0(\R^n\setminus \Gamma)=\ker\TTr_\Gamma$.  Let $E_\Gamma=
(\mathrm{tr}_\Gamma)^{-1}: \mathcal{B}(\Gamma) \to V_1(\R^n\setminus \Gamma)$. Thus, the unique weak solution of~\eqref{Eq:Dir-1H-Prob} is $u= E_\Gamma f$. 
\end{remark}
%

%

 
\subsection{Measure boundary framework}\label{SSMeasureBF}
 We follow~\cite{HINZ-2023} for a construction of the trace operator on a more general space than previously introduced for $H^1(\Omega)$. 
 
Let $\Omega\subset \mathbb{R}^n$ be an arbitrary domain. This time, we define 
\[H^{1,2}(\Omega)=\{f\in \mathcal{D}'(\Omega): \text{$f=g|_\Omega$ for some $g\in H^{1}(\mathbb{R}^n)$}\},\] 
where $\mathcal{D}'(\Omega)=\mathcal{L}(C_0^\infty(\Omega),\R)$ denotes the space of distributions on $\Omega$. The space $H^{1,2}(\Omega)$ is a Hilbert space \cite[Section 2.2]{Triebel2002} endowed with the norm  
\[\left\|f\right\|_{H^{1,2}(\Omega)}:=\inf \{ \left\|g\right\|_{H^{1}(\mathbb{R}^n)}:\ g\in H^{1}(\mathbb{R}^n), f=g|_\Omega\ \text{in $\mathcal{D}'(\Omega)$}\}.\] 

Suppose that $\mu$ is a nonzero finite Borel measure whose support $\operatorname{supp} \mu =\del \Omega$, satisfaying a $d$-upper regular condition
for a fixed $d\in (n-2,n)$:  there exists $c_d>0$ such that
\begin{equation}\label{Eqmu}
\forall x\in\bord,\; \forall r\in]0,1],\quad\mu(B_r(x))\le c_d\,r^d.
\end{equation}
 Given $f\in H^{1,2}(\Omega)$, we set 
\[\mathrm{Tr}f:=\tilde{g}|_{\del \Omega},\]
where $g$ is an element of $H^{1}(\mathbb{R}^n)$ such that $f=g|_\Omega$ in $\mathcal{D}'(\Omega)$ (trivially $\R^n$ is an extension domain!). By the arguments used to prove \cite[Theorem 6.1]{BIEGERT-2009} one can see that $\mathrm{Tr}$ is a well-defined linear map
$\mathrm{Tr}:H^{1,2}(\Omega)\to L^2(\del \Omega,\mu)$ in the sense that if $g,h\in H^{1}(\mathbb{R}^n)$ satisfy $g=h$ $\lambda^n$-a.e. in $\Omega$, then $\mathrm{Tr} g=\mathrm{Tr} h$ $\mu$-a.e. on $\del \Omega$. 
The trace $\mathrm{Tr}:H^{1,2}(\Omega)\to L^2(\del \Omega,\mu)$ is a bounded linear operator, and the image $\mathrm{Tr}(H^{1,2}(\Omega))$ agrees with $\mathrm{Tr}(H^{1}(\mathbb{R}^n))$. In particular, the continuity of $\mathrm{Tr}$ implies that for every measurable $E\subset \del \Omega$, 
 $$\mathrm{cap}(E)=0 \Rightarrow \mu(E)=0.$$

 Let $\Omega$ is an $H^1$-extension domain and hence $H^{1}(\Omega)=H^{1,2}(\Omega)$. To study properties of the trace operator $\mathrm{Tr}:H^{1}(\Omega)\to L^2(\del \Omega,\mu)$, we always can study the properties of $\mathrm{Tr}_{\del \Omega}:H^{1}(\R^n)\to L^2(\del \Omega,\mu)$, as soon as (for all $u\in H^1(\Omega)$ it holds formula~\eqref{EqTraceDef}) 
 $\mathrm{Tr}=\mathrm{Tr}_{\del \Omega}\circ E_\Omega$ with $E_\Omega: H^1(\Omega)\to H^1(\R^n)$ a linear bounded extension operator.
 
Thanks to~\cite[Proposition 2.1]{HINZ-2023}, we know that the trace space $\mathrm{Tr}_{\del \Omega} (H^{1}(\mathbb{R}^n))$ (endowed as previously with the quotient norm) does not depend on the choice of an individual measure $\mu$ but only on the choice of an equivalence class of measures having ${\del \Omega}$ as a quasi support, see also \cite[Section 4.6, p. 168]{FOT94}.

Therefore, we have the compactness of the trace operator $\mathrm{Tr}_{\del \Omega}$ in the $L^2$-framework   if ${\del \Omega}$ is a compact set of $\R^n$ (by ~\cite[Theorem 2.1,Corollary 2.2]{HINZ-2023}):
\begin{theorem}\label{ThL2denseTr}
	Let  $d\in (n-2,n]$ and $\mu$ be a finite Borel measure with support ${\del \Omega}=\supp\mu$ and satisfying \eqref{Eqmu}. Then $\operatorname{Tr}_{\del \Omega}: H^{1}(\mathbb{R}^n)\to L^2({\del \Omega},\mu)$ is a bounded linear operator  with norm depending only on $n$, a nonnegative power of $\mu({\del \Omega})$,   $d$ and $c_d$. This operator is compact for compact sets ${\del \Omega}$.
The trace image space $\mathrm{Tr}_{\del \Omega}(H^{1}(\mathbb{R}^n))=\B(\del \Omega)$ is dense in $L^2({\del \Omega},\mu)$.
\end{theorem}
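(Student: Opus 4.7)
My plan has three prongs---boundedness, compactness, and density---with the trace inequality being the engine that drives the other two. For the boundedness, I would establish an Adams--Maz'ya type estimate
\begin{equation*}
\int_{\bord}|\widetilde{f}|^{2}\,\dmu\le C(n,d,c_d)\bigl(1+\mu(\bord)^{\beta}\bigr)\|f\|_{H^{1}(\R^{n})}^{2},\qquad f\in H^{1}(\R^{n}),
\end{equation*}
for some $\beta\ge 0$. The idea is to represent $f=G_{1}\ast g$ through the Bessel potential of order one, so that $\|g\|_{L^{2}(\R^n)}\sim\|f\|_{H^{1}(\R^n)}$, and bound $\|G_{1}\ast g\|_{L^{2}(\bord,\mu)}$ in terms of $\|g\|_{L^{2}(\R^n)}$ by a dyadic / level-set argument of Maz'ya--Adams type. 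The hypothesis $d>n-2$ is precisely what puts the pair $(p,q)=(2,2)$ in the admissible range of Adams' embedding of Riesz potentials into $L^{q}(\mu)$. Because \eqref{Eqmu} only constrains $\mu(B_{r}(x))$ for $r\le 1$, the kernel is split into a small-scale part, handled by $c_d$ via Adams, and a large-scale tail, handled by Young's inequality together with finiteness of $\mu(\bord)$; this split is the source of the power $\mu(\bord)^{\beta}$ in the constant (with $\beta$ turning out to equal $1$). A density argument via quasi-continuous representatives then extends the estimate from $C_{0}^{\infty}(\R^n)$ to all of $H^1(\R^n)$.

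For compactness when $\bord$ is compact, I would refine the same argument into an interpolation-type trace inequality. Splitting scales at a parameter $\delta\in(0,1]$ and optimising in $\delta$ yields, for any ball $B_R\supset\bord$,
\begin{equation*}
\|\TTr_{\bord}f\|_{L^{2}(\bord,\mu)}^{2}\le C\,\|f\|_{L^{2}(B_R)}^{\alpha}\|f\|_{H^{1}(\R^{n})}^{2-\alpha}
\end{equation*}
for some $\alpha>0$. Given a bounded sequence $\{f_k\}\subset H^{1}(\R^{n})$, multiplying by a smooth cutoff equal to $1$ on $\bord$ and supported in $B_R$ keeps the sequence bounded in $H^1(\R^n)$ and supported in $B_R$ without changing the trace on $\bord$; Rellich--Kondrachov then extracts an $L^{2}(B_R)$-convergent subsequence, and the interpolation inequality converts this convergence into $L^{2}(\bord,\mu)$-convergence of the traces.

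For density of $\B(\bord)$ in $L^{2}(\bord,\mu)$, I would note that $C_{0}^{\infty}(\R^{n})\subset H^{1}(\R^{n})$ and that, for $\varphi\in C_{0}^{\infty}(\R^{n})$, $\varphi$ is already continuous, hence its own quasi-continuous representative, so $\TTr_{\bord}\varphi=\varphi|_{\bord}\in\B(\bord)$. Since $\mu$ is a finite Borel measure with $\supp\mu=\bord$, Lusin's/Urysohn's theorem gives that $C_{0}(\R^{n})|_{\bord}$ is dense in $L^{2}(\bord,\mu)$, and mollification shows $C_{0}^{\infty}(\R^{n})|_{\bord}$ is uniformly dense in $C_{0}(\R^{n})|_{\bord}$; combining, $\B(\bord)$ is dense in $L^{2}(\bord,\mu)$.

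The hard part is the Adams--Maz'ya trace estimate with the precise dependence of the constant on $n$, $d$, $c_d$, and a nonnegative power of $\mu(\bord)$: one must organize the dyadic decomposition so that the small-scale contribution depends only on $c_d$ (independently of the diameter of $\bord$) while the large-scale tail picks up a single $\mu(\bord)^{\beta}$ factor. Once this is in hand, compactness follows by scale-interpolation combined with Rellich--Kondrachov, and density by the standard approximation by smooth functions; both subsequent steps are essentially routine.
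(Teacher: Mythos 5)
Your proposal is correct and follows essentially the same route as the paper: for boundedness and compactness the paper only quotes the results of Hinz et al.\ (Theorem~2.1 and Corollary~2.2 of the cited work), and the Maz'ya--Adams potential estimate with the scale splitting at $r=1$ (small scales controlled by $c_d$ and $d>n-2$, the tail contributing the power of $\mu(\bord)$) together with the interpolation-plus-cutoff-plus-Rellich compactness argument you sketch are precisely the standard proofs underlying those citations. Your density argument (continuous compactly supported functions are dense in $L^{2}(\bord,\mu)$ for a finite Borel measure, then mollify) is the same in substance as the paper's Stone--Weierstrass/Borel-regularity argument.
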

The density of the trace image space in $L^2({\del \Omega},\mu)$ follows from the fact that the measure $\mu$ on ${\del \Omega}$ is Borel regular, which implies (see~\cite[Proposition~3]{ROZANOVA-PIERRAT-2021}) that $\{ v|_{\del \Omega}: \, v\in C_0^\infty(\mathbb{R}^n)\}$, which is dense in $C({\del \Omega})$ by the Stone-Weierstrass theorem for the uniform norm, is also dense in $L^2({\del \Omega},\mu)$~\cite[Theorem~2.11]{EVANS-2010}. We also note that $C_0^\infty(\mathbb{R}^n)$ is dense in $H^1(\R^n)$. 
Once the domain $\Omega$ is an $H^1$-extension domain, we have the density of $C^\infty(\overline{\Omega})$ in $H^1(\Omega)$. Therefore, by~\eqref{EqTraceDef}, we find the direct corollary~\cite{HINZ-2021,HINZ-2021-1,HINZ-2023} of Theorem~\ref{ThL2denseTr}:
\begin{theorem}\label{ThTrComp}
	Let $\Omega$ be an $H^1$-extension domain of $\R^n$  and $\mu$ be a finite Borel measure with a compact support ${\del \Omega}=\supp\mu$ satisfying \eqref{Eqmu} for $d\in (n-2,n)$. Then thace operator $\operatorname{Tr}: H^{1}(\Omega)\to L^2({\del \Omega},\mu)$ is a linear compact operator with the dense image $\mathrm{Tr}(H^{1}(\Omega))=\B(\del \Omega)$ in $L^2({\del \Omega},\mu)$. If $\del \Omega$ is not compact, the trace operator is a linear, bounded operator.
\end{theorem}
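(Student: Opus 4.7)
The plan is to present the statement as a direct reduction to Theorem~\ref{ThL2denseTr} via a factorization of the trace through any bounded linear extension operator. Because $\Omega$ is an $H^1$-extension domain we have $H^1(\Omega)=H^{1,2}(\Omega)$, so we may fix a bounded linear extension $E_\Omega:H^1(\Omega)\to H^1(\R^n)$ and invoke the consistency built into the definition of the $L^2$-trace in Subsection~\ref{SSMeasureBF}: if $u\in H^1(\Omega)$ and $w\in H^1(\R^n)$ coincides with $u$ a.e.\ on $\Omega$, then $\mathrm{Tr}\,u=\widetilde{w}|_{\partial\Omega}$ $\mu$-a.e. Choosing $w=E_\Omega u$ one obtains the factorization
\begin{equation*}
\mathrm{Tr}=\mathrm{Tr}_{\partial\Omega}\circ E_\Omega\colon H^1(\Omega)\to L^2(\partial\Omega,\mu).
\end{equation*}

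From here the first two claims are immediate. Since $E_\Omega$ is bounded and, by Theorem~\ref{ThL2denseTr}, $\mathrm{Tr}_{\partial\Omega}:H^1(\R^n)\to L^2(\partial\Omega,\mu)$ is bounded (and compact whenever $\partial\Omega$ is compact), the composition $\mathrm{Tr}$ is bounded in general and compact when $\partial\Omega$ is compact, giving the final sentence of the theorem as well.

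For the image identification $\mathrm{Tr}(H^1(\Omega))=\B(\partial\Omega)$, the inclusion $\subseteq$ follows from the factorization since $\mathrm{Tr}(H^1(\Omega))\subseteq\mathrm{Tr}_{\partial\Omega}(H^1(\R^n))=\B(\partial\Omega)$. For $\supseteq$, given any $w\in H^1(\R^n)$ set $u:=w|_\Omega\in H^1(\Omega)$; applying the independence of $\mathrm{Tr}$ on the choice of representative (with $w$ itself playing the role of the ambient extension) yields $\mathrm{Tr}\,u=\widetilde{w}|_{\partial\Omega}=\mathrm{Tr}_{\partial\Omega} w$, so every element of $\B(\partial\Omega)$ lies in $\mathrm{Tr}(H^1(\Omega))$. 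Density of $\B(\partial\Omega)$ in $L^2(\partial\Omega,\mu)$ is then imported verbatim from Theorem~\ref{ThL2denseTr}.

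The only subtle step is verifying that the two \emph{a priori} distinct routes to the trace — applying $E_\Omega$ first and quasi-continuously restricting, versus picking any ambient representative $w$ of $u|_\Omega$ — actually deliver the same $\mu$-equivalence class; this is exactly the well-definedness already established in the discussion preceding Theorem~\ref{ThL2denseTr} (following the argument of \cite[Theorem~6.1]{BIEGERT-2009}), which combined with the capacity-to-measure control $\mathrm{cap}(E)=0\Rightarrow\mu(E)=0$ makes the factorization legal. Beyond that, no further work is required: the statement is genuinely a corollary of Theorem~\ref{ThL2denseTr} combined with the extension property of $\Omega$.
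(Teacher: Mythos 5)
Your proposal is correct and follows essentially the same route as the paper: the factorization $\mathrm{Tr}=\mathrm{Tr}_{\partial\Omega}\circ \mathrm{E}_\Omega$ through a bounded extension operator, with boundedness, compactness for compact $\partial\Omega$, the image identification $\mathrm{Tr}(H^1(\Omega))=\mathcal{B}(\partial\Omega)$, and density in $L^2(\partial\Omega,\mu)$ all imported from Theorem~\ref{ThL2denseTr}. The only cosmetic difference is that the paper re-derives the density via Stone--Weierstrass and the density of $C^\infty(\overline{\Omega})$ in $H^1(\Omega)$, whereas you import it verbatim from Theorem~\ref{ThL2denseTr}; both are legitimate.
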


 Let us consider the inclusion of boundary spaces
\begin{equation}\label{EqIncl}
\mathcal{B}(\del \Omega) \subset L^2(\del \Omega,\mu).	
\end{equation}
 If the continuity of this inclusion is obvious by Theorem~\ref{ThL2denseTr} and the capacity property of $H^1$ (a zero capacity set is also $\mu$-negligeable set),
$$\|f\|_{L^2(\del \Omega,\mu)}\le C \|f\|_{\mathcal{B}(\del \Omega)}, \; C>0,$$
the question of injectivity of this inclusion, needed for speaking about ``embedding'' (a continuous injective linear mapping),  was recently considered in~\cite{CHANDLER-WILDE-2025,ARXIV-HINZ-2025}.
 In particular we have~\cite{CHANDLER-WILDE-2025} the following theorem
 \begin{theorem}\label{ThGTripleInj}
 	Let  $\mu$ be a Radon measure whose support is a compact set $\Gamma$ of $\R^n$, and $\mu$ is upper $d$-regular on $\Gamma$, for some $d\in (n-2,n]$, then $\mu$ and $\Gamma$ satisfy for some constant $C>0$,
\begin{equation} \label{eq:bd}
\|\mathrm{Tr}_\Gamma v\|_{L^2(\Gamma,\mu)} \leq C\|v\|_{H^1(\R^n)}, \qquad v\in H^1(\R^n).
\end{equation} 
In addition, the following are equivalent:
\begin{enumerate}
\item[(i)] For every $u\in H^1(\R^n)$,
 \begin{equation} \nonumber
 \tilde u = 0 \;\, \mu\mbox{-a.e. on } \Gamma\ \Leftrightarrow\ \tilde u=0 \mbox{ q.e. on } \Gamma.
 \end{equation}
\item[(ii)] inclusion $\mathcal{B}(\Gamma) \subset L^2(\Gamma,\mu)$ is injective and defines the embedding with dense range.
\end{enumerate}
 \end{theorem}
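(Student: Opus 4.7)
My plan is to observe that the inequality \eqref{eq:bd} is essentially already contained in Theorem~\ref{ThL2denseTr}: under the stated hypotheses on $\mu$ (finite Radon, compactly supported on $\Gamma$, upper $d$-regular with $d\in(n-2,n]$), the trace $\mathrm{Tr}_\Gamma:H^1(\R^n)\to L^2(\Gamma,\mu)$ is bounded with norm depending only on $n$, $d$, $c_d$ and $\mu(\Gamma)$. So the first assertion requires only a reference. What genuinely needs unpacking is the equivalence of (i) and (ii); I would organise the argument as a careful diagram chase between two quotient spaces.

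First I would make precise the inclusion map $\iota:\mathcal{B}(\Gamma)\hookrightarrow L^2(\Gamma,\mu)$. By definition, elements of $\mathcal{B}(\Gamma)$ are q.e.\ equivalence classes $[\tilde w|_\Gamma]_{\mathrm{q.e.}}$ of pointwise restrictions of quasi-continuous representatives of $w\in H^1(\R^n)$, whereas elements of $L^2(\Gamma,\mu)$ are $\mu$-a.e.\ equivalence classes. The continuity \eqref{eq:bd} combined with the standard capacity--measure comparison yields $\mathrm{cap}(E)=0\Rightarrow \mu(E)=0$ for Borel $E\subset\Gamma$ (as already noted after Theorem~\ref{ThL2denseTr}); this is exactly what is needed for $\iota$ to be \emph{well-defined}, since it ensures that two q.e.-equal representatives are also $\mu$-a.e.-equal. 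I would emphasise that the converse implication ($\tilde u=0$ q.e.\ on $\Gamma$ $\Rightarrow$ $\tilde u=0$ $\mu$-a.e.\ on $\Gamma$) holds unconditionally, so the real content of (i) is the single non-trivial direction $\mu$-a.e.\ $\Rightarrow$ q.e.

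Next I would show (i)$\Leftrightarrow$ injectivity of $\iota$ by direct computation of $\ker\iota$. An element $[\tilde w|_\Gamma]_{\mathrm{q.e.}}\in\mathcal{B}(\Gamma)$ lies in $\ker\iota$ iff $\tilde w=0$ $\mu$-a.e.\ on $\Gamma$, and it equals the zero class in $\mathcal{B}(\Gamma)$ iff $\tilde w=0$ q.e.\ on $\Gamma$. Hence $\ker\iota=\{0\}$ iff the implication $\tilde w=0$ $\mu$-a.e.\ $\Rightarrow$ $\tilde w=0$ q.e.\ holds for every $w\in H^1(\R^n)$, and combined with the automatic converse this is exactly (i). Finally, the density of the range of $\iota$ in $L^2(\Gamma,\mu)$ is a restatement of the density assertion in Theorem~\ref{ThL2denseTr} (obtained from the density of $\{v|_\Gamma : v\in C_0^\infty(\R^n)\}$ in $C(\Gamma)$, hence in $L^2(\Gamma,\mu)$, together with the density of $C_0^\infty(\R^n)$ in $H^1(\R^n)$), so the range is automatically dense whenever the inclusion is well-defined; this means density did not need to be added as an extra condition in (ii).

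The only delicate point I expect is bookkeeping about \emph{which} equivalence class one is working in, especially at the moment one identifies the kernel of $\iota$. Since $\mathcal{B}(\Gamma)$ and $L^2(\Gamma,\mu)$ are both quotients of a space of pointwise-defined functions on $\Gamma$, but by different equivalence relations, one has to verify that choosing a different $H^1(\R^n)$-representative $w'$ of $f\in\mathcal{B}(\Gamma)$ (equivalently, choosing a different quasi-continuous representative $\tilde w$) does not alter either the q.e.\ or the $\mu$-a.e.\ class of the restriction to $\Gamma$. Both facts reduce to the capacity-to-measure implication together with the uniqueness q.e.\ of quasi-continuous representatives. Once this is in place, (i)$\Leftrightarrow$(ii) is a one-line rewording, and no further analytic input is needed beyond what Theorem~\ref{ThL2denseTr} already supplies.
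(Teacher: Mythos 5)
Your proposal is correct and follows essentially the same route as the paper, which likewise obtains \eqref{eq:bd} from the boundedness of $\mathrm{Tr}_\Gamma$ in Theorem~\ref{ThL2denseTr}, reduces (ii) to injectivity of the inclusion $i:\mathcal{B}(\Gamma)\to L^2(\Gamma,\mu)$ (density being automatic from Theorem~\ref{ThL2denseTr}), and identifies injectivity with the implication ``$\tilde u=0$ $\mu$-a.e.\ $\Rightarrow$ $\tilde u=0$ q.e.'' via the capacity-to-measure comparison $\mathrm{cap}(E)=0\Rightarrow\mu(E)=0$. Your explicit kernel computation and the bookkeeping of the two equivalence relations simply make precise what the paper's sketch (which defers the details to the cited reference) leaves implicit.
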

 The main idea for the proof of this theorem is to consider the mapping $i:\mathcal{B}(\Gamma) \to L^2(\Gamma,\mu)$, $i=\tilde{\mathrm{Tr}}_\Gamma\circ E_D$ with $\tilde{\mathrm{Tr}}_\Gamma: C_0^\infty (\R^n) \to L^2(\Gamma,\mu)$, $\tilde{\mathrm{Tr}}_\Gamma v:= v|_{\Gamma},$  for $v\in C_0^\infty (\R^n)$. Then statements (i) and (ii) are corollaries of~\eqref{eq:bd}. We find that $i:\mathcal{B}(\Gamma) \to L^2(\Gamma,\mu)$ is an embedding and $i(\mathcal{B}(\Gamma))$ is dense
 in $L^2(\Gamma,\mu)$, as stated in Theorem~\ref{ThTrComp}. Moreover, considering the joint operator of $i$, $i^*: L^2(\Gamma,\mu)\to \mathcal{B}'(\Gamma)$ with the usual notations for the dual spaces $\mathcal{B}'(\Gamma)=\mathcal{L}(\mathcal{B}(\Gamma),\R)$ and $(L^2(\Gamma,\mu))'=L^2(\Gamma,\mu)$, we obtain that $L^2(\Gamma,\mu)$ is densely embedded in $\mathcal{B}'(\Gamma)$, and we have the usual Gelfand triple: $ \mathcal{B}(\Gamma)\subset L^2(\Gamma,\mu) \subset \mathcal{B}'(\Gamma)$.

\section{Poincaré-Steklov operator}
The previous trace constructions are helpful for the generalization of various notions, such as the normal derivative and the Poincaré-Steklov (or Dirichlet-to-Neumann)  operator. Each time we could think about two different frameworks: the image of the trace ``measure free'' case, when we can establish the isometry onto properties and the $L^2(\del \Omega,\mu)$ case depending on the classe of boundary measures with the dense embeddings $ \mathcal{B}(\Gamma)\subset L^2(\Gamma,\mu) \subset \mathcal{B}'(\Gamma)$.

For simplicity, we mainly focus on the $\Delta-1$ operator, weak formulations of which are associated directly with the $H^1$-norm. For the case of $\Delta$, the analysis is analogous~\cite {CLARET-2025,ARXIV-CLARET-2024-1}, but we need to pay attention to work in more appropriate spaces where $\|\nabla u\|_{(L^2(\Omega))^n}$ becomes a norm.

\subsection{Image of the trace or ``measure free'' framework}\label{SSPoincB}
In Section~\ref{SSMeasureBF}, we have defined the solution of the Dirichlet type boundary problem for the operator $\Delta-1$.
Let us now consider, as in~\cite[Section~2.3]{CLARET-2025}, the following Neumann problem:
\begin{equation}\label{SO}
\begin{cases}
-\Delta u+u &= 0\quad \text{in $\Omega$},\\
\qquad\frac{\partial u}{\partial\nu}|_{\bord} &= g.
\end{cases}
\end{equation}
Suppose $\Omega\subset \R^n$ is an $H^1$-extension domain with a boundary of positive capacity. Given $g\in \Hm$, we call $u\in H^1(\Omega)$ a \emph{weak solution} of the Neumann problem 
if for all $v\in H^1(\Omega)$, we have
\begin{equation}\label{Neumansol}
\langle u,v\rangle_{H^1(\Omega)}=\langle g, \TTr v\rangle_{\Hm,\Hp}.
\end{equation} 
Eq.~\eqref{Neumansol} omits the symbol $\frac{\partial u}{\partial\nu}$ of the normal derivative, but it is implicitly viewed as an element of $\Hm$. By the Riesz representation theorem, for any $g\in \Hm$ the Neumann problem \eqref{SO} has a unique weak solution $u_g\in H^1(\Omega)$. However, it is also an element of $V_1(\Omega)$. We define $$\mathscr{N}:\Hm\to V_1(\Omega),\quad \mathscr{N}g:=u_g,$$ as  the linear operator mapping a given element $g$ of $\Hm$ to the unique weak solution  of \eqref{SO}. 
By the isometry onto properties of $\TTr|_{V_1(\Omega)}$ and of its adjoint operator, it holds~\cite{CLARET-2025}  $\|\mathscr{N}g\|_{H^1(\Omega)}= \|g\|_{\Hm}$, $g\in \Hm$, that is, $\mathscr{N}$ is an isometry.
In ~\cite[Corollary~2.19]{CLARET-2025}, we proved
\begin{proposition}
Let $\Omega$ be $H^1$-extension domain with a boundary of positive capacity and let $\Tr:=\TTr|_{V_1(\Omega)}$. The linear operator $\Tr\circ\,\Nscr: \Hm\to \Hp$ is an isometry and onto. It satisfies, for $g,h\in\Hm$, 
\[\left\langle g, \Tr\circ\,\Nscr h\right\rangle_{\Hm,\,\Hp}=\left\langle g, h\right\rangle_{\Hm} = \left\langle h, \Tr\circ\,\Nscr g\right\rangle_{\Hm,\,\Hp}.\] 
\end{proposition}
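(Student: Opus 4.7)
The plan is to assemble the statement from two ingredients already in place. The first is part (iii) of Theorem~\ref{Trisom}, which gives that $\Tr=\TTr|_{V_1(\Omega)}\colon V_1(\Omega)\to\Hp$ is a bijective linear isometry. The second is the isometry property $\|\Nscr g\|_{H^1(\Omega)}=\|g\|_{\Hm}$ noted just before the proposition. The key missing piece, which I would establish first, is that $\Nscr$ is in fact onto $V_1(\Omega)$.

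To upgrade $\Nscr$ to a bijection, I would take an arbitrary $u\in V_1(\Omega)$ and consider the continuous linear functional $v\mapsto \langle u,v\rangle_{H^1(\Omega)}$ on $H^1(\Omega)$. Because of the orthogonal decomposition~\eqref{V1} and Theorem~\ref{Trisom}(i), this functional vanishes on $H^1_0(\Omega)=\ker\TTr$, so it factors through the quotient $H^1(\Omega)/H^1_0(\Omega)$, which via $\TTr$ is isometrically identified with $\Hp$. Therefore there exists a unique $g\in\Hp'=\Hm$ such that $\langle u,v\rangle_{H^1(\Omega)}=\langle g,\TTr v\rangle_{\Hm,\Hp}$ for every $v\in H^1(\Omega)$; this is precisely the weak formulation~\eqref{Neumansol}, so $u=\Nscr g$. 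Combined with the previously stated norm preservation, $\Nscr\colon\Hm\to V_1(\Omega)$ is a surjective isometry, and hence the composition $\Tr\circ\Nscr\colon\Hm\to\Hp$ of two surjective isometries is itself a surjective isometry.

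For the bilinear identity I would simply test \eqref{Neumansol} for $u_h=\Nscr h$ against $v=\Nscr g\in V_1(\Omega)\subset H^1(\Omega)$, which yields
\[\langle \Nscr g,\Nscr h\rangle_{H^1(\Omega)}=\langle h,\TTr\Nscr g\rangle_{\Hm,\Hp}=\langle h,\Tr\circ\Nscr g\rangle_{\Hm,\Hp},\]
and the analogous identity after swapping $g$ and $h$. Polarization applied to the real Hilbert-space isometry $\Nscr$ gives $\langle \Nscr g,\Nscr h\rangle_{H^1(\Omega)}=\langle g,h\rangle_{\Hm}$, which is symmetric in $g,h$; comparing with the two previous displays delivers the required chain of equalities.

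The only genuine obstacle is the surjectivity of $\Nscr$; once that is secured, both the isometry claim and the duality formula reduce to Hilbert-space bookkeeping. The underlying conceptual point is that, modulo the Riesz identification of $V_1(\Omega)$ with its own dual, $\Nscr$ coincides with the inverse of $\Tr$, so composing the two produces the canonical pairing between $\Hm$ and $\Hp$.
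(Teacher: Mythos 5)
Your argument is correct and follows essentially the same route as the paper: the surjectivity of $\Nscr$ that you obtain by factoring the functional $v\mapsto\langle u,v\rangle_{H^1(\Omega)}$ through the quotient $H^1(\Omega)/H^1_0(\Omega)\cong\Hp$ is exactly what the paper packages as "the isometry onto properties of $\TTr|_{V_1(\Omega)}$ and of its adjoint operator," and the duality identities then follow, as in the paper, from testing \eqref{Neumansol} with elements of $V_1(\Omega)$ together with polarization.
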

The notion of the abstract normal derivative on $H^1$-extension domains repeats the known duality construction $\left\langle \frac{\partial u}{\partial\nu}, \TTr v\right\rangle_{H^{-\frac12},\,H^{\frac12}}$ by the Green formula in the Lipschitz case with $\Hp=H^{\frac12}(\del \Omega)$ (see for instance \cite[Section VII.1, Lemma 1]{DUTRAY-LIONS1988}). Different variants of this definition
 have been defined and studied by various authors in different contexts, see for instance \cite[p. 218]{CONSTANTINESCU-CORNEA-1963}, \cite[Section 3.2]{LEJAN-1978} and \cite{LANCIA-2002}. Let us follow~\cite{CLARET-2025,HINZ-2021-1,HINZ-2023}.
 We define the space $H^1_\Delta(\Omega)$ as all elements $u\in H^1(\Omega)$ with $\Delta u\in L^2(\Omega)$, understanding $\Delta u$ in the distributional sense. Clearly, $V_1(\Omega)\subset H^1_\Delta(\Omega)$. Let us introduce $H_{\del \Omega}:=\TTr_\Omega\circ E_D:\, \Hp\to H^1(\Omega)$ a linear extension operator of norm one, uniquely defined (see~\cite[Corollary~3.1]{HINZ-2023}). For all $u\in H^1_\Delta(\Omega)$ we define  a bounded linear functional $\frac{\partial u}{\partial\nu} \in \Hm$ by the identity
		\begin{multline}\label{FracGreen}
				\left\langle \frac{\partial u}{\partial\nu}, 
		\psi \right\rangle_{\Hm,\Hp}\\
		=\int_\Omega H_{\del\Omega} \psi\: \Delta u\:dx + \int_\Omega \nabla H_{\del\Omega} \psi \: \nabla u  \:dx, \quad \psi \in \Hp.
		\end{multline}
We refer to the unique element $\frac{\partial u}{\partial\nu}$ of $\Hm$ as the \emph{normal derivative of $u$ on $\del \Omega$}. 

\begin{remark}\mbox{}
 The right hand side of equality (\ref{FracGreen}) remains unchanged if  $H_{\del\Omega} \psi$ is replaced by an arbitrary element $w$ of $H^1(\Omega)$ satisfying $\mathrm{Tr} w=\psi$ (see~\cite[Remark~4.2]{HINZ-2023} and \cite[Section 2.3]{FOT94}; \cite[Theorem 7]{HINZ-2021-1}). 
 In particular, the right-hand side of formula~\eqref{FracGreen} is exactly the Green formula:
 \begin{equation}\label{EqGreenInt}
\left\langle \frac{\partial u}{\partial\nu},\TTr v\right\rangle_{\Hm,\,\Hp}= \int_\Omega{(\Delta u)v\,\dx}+\int_\Omega \nabla u\nabla v\,\dx,\quad v\in H^1(\Omega),
\end{equation}
  known for the regular $\del \Omega$ to compare to outward normal vector (in the regular framework instead of  $\left\langle \frac{\partial u}{\partial\nu}, 
		\psi \right\rangle_{\Hm,\Hp}$ there is $\int_{\del \Omega} \nabla u\cdot \nu \psi d \sigma$ with the outward normal vector $\nu$). To define exterior normal derivatives of $u$ as for the regular case, it is sufficient to change ``$+$'' in the right-hand side of~\eqref{FracGreen} by ``$-$''. 
 \end{remark}
 The operator $\frac{\partial}{\partial\nu}: H^1_\Delta(\Omega)\to \Hm$ is linear and bounded in the sense that 
\begin{equation}\label{E:boundnormalder}
\left\|\frac{\partial u}{\partial\nu}\right\|_{\Hm}\leq \|u\|_{H^1(\Omega)}+\|\Delta u\|_{L^2(\Omega)}.
\end{equation}
If we restrict the normal derivative on the space of $1$-harmonic functions  $$\del_{\nu}:=\left.\frac{\partial}{\partial\nu}\right|_{V_1(\Omega)},$$ we obtain the isometry and onto properties~\cite[Corollary~3]{CLARET-2025}. 

\begin{corollary}\label{C:Di}
Let $\Omega$ be an $H^1$-extension domain with a boundary of positive capacity. Then the following assertions hold:
\begin{enumerate}
\item[(i)] Both $\Nscr:\Hm\to V_1(\Omega)$ and the operator 
$\del_{\nu}: V_1(\Omega)\to \Hm$ are isometries and onto, and  $\del_{\nu}=\Nscr^{-1}_1$.
\item[(ii)] For any $u,v\in V_1(\Omega)$ we have
\begin{equation*}\label{E:secondGreen}
\big\langle  \del_{\nu} u,\Tr_i v\big\rangle_{\Hm,\Hp}=\left\langle u,v\right\rangle_{H^1(\Omega)}=\big\langle  \del_{\nu} v,\Tr_i u\big\rangle_{\Hm,\Hp}.
\end{equation*}
\item[(iii)] The dual $\big(\del_{\nu}\big)^\ast:\Hp\to V_1'(\Omega)$ of $\del_{\nu}$ is an isometry and onto.
\end{enumerate}
\end{corollary}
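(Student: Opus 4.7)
The plan is to derive all three parts from the Green identity~\eqref{EqGreenInt} applied to $1$-harmonic functions, combined with the fact, already recorded just above the corollary, that $\Nscr:\Hm\to V_1(\Omega)$ is a surjective linear isometry. The key observation is that every $u\in V_1(\Omega)$ satisfies $\Delta u=u$ distributionally, so $\Delta u\in L^2(\Omega)$, giving $V_1(\Omega)\subset H^1_\Delta(\Omega)$ and hence $\del_\nu u\in\Hm$ is well defined.

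To prove (i), first show $\del_\nu\circ\Nscr=\operatorname{id}_{\Hm}$. Given $g\in\Hm$, set $u=\Nscr g\in V_1(\Omega)$; substituting $\Delta u=u$ into~\eqref{EqGreenInt} yields
\begin{equation*}
\left\langle \frac{\partial u}{\partial\nu},\TTr v\right\rangle_{\Hm,\Hp}=\int_\Omega uv\,\dx+\int_\Omega\nabla u\cdot\nabla v\,\dx=\langle u,v\rangle_{H^1(\Omega)}
\end{equation*}
for every $v\in H^1(\Omega)$, and this equals $\langle g,\TTr v\rangle_{\Hm,\Hp}$ by the defining relation~\eqref{Neumansol} of $u=\Nscr g$. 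The surjectivity of $\TTr:H^1(\Omega)\to\Hp$ from Theorem~\ref{Trisom}(iii) then forces $\frac{\partial u}{\partial\nu}=g$ in $\Hm$. Conversely, for an arbitrary $u\in V_1(\Omega)$ the same Green computation shows that $u$ satisfies~\eqref{Neumansol} with datum $\del_\nu u$, hence by uniqueness $u=\Nscr(\del_\nu u)$. Thus $\del_\nu=\Nscr^{-1}$, and $\del_\nu$ inherits the surjective-isometry property from $\Nscr$.

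Part (ii) is then immediate from the same computation: for $u,v\in V_1(\Omega)$ Green's formula gives $\langle\del_\nu u,\TTr v\rangle_{\Hm,\Hp}=\langle u,v\rangle_{H^1(\Omega)}$, and swapping $u\leftrightarrow v$ together with the symmetry of the $H^1$-inner product yields the companion identity. For (iii) I invoke the general Banach-space fact that the adjoint of a surjective linear isometry is itself a surjective linear isometry; combined with the reflexivity of the Hilbert space $\Hp$, which canonically identifies $\Hm^{\ast}$ with $\Hp$, part (i) dualizes to give $(\del_\nu)^{\ast}:\Hp\to V_1'(\Omega)$ as a surjective isometry.

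The only delicate step is justifying the usable form~\eqref{EqGreenInt} of the Green identity starting from the abstract definition~\eqref{FracGreen}: one must check that the right-hand side of~\eqref{FracGreen} is unchanged when $H_{\del\Omega}\psi$ is replaced by an arbitrary $v\in H^1(\Omega)$ with $\TTr v=\psi$, which is precisely the independence-from-extension assertion stated in the remark following~\eqref{FracGreen}. Once this is in hand, parts (i)–(iii) are all essentially algebraic consequences of that single identity.
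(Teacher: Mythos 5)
Your proof is correct and follows exactly the route the paper sets up: the Green identity \eqref{EqGreenInt} specialized to $1$-harmonic functions (where $\Delta u=u$), the weak formulation \eqref{Neumansol} of the Neumann problem, the surjectivity of $\TTr$ onto $\Hp$ to identify $\del_\nu\circ\Nscr=\operatorname{id}$, and duality plus reflexivity of $\Hp$ for part (iii). This matches the argument of \cite[Corollary~3]{CLARET-2025} that the paper invokes, so no further comment is needed.
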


Therefore, as the definitions of the trace and normal derivatives are known, it is natural to define the Poincaré-Steklov (or Dirichlet-to-Neumann) operator. Let us follow~\cite{ARFI-2019,CLARET-2025}. 

Let $\Omega\subset \R^n$ be a bounded $H^1$-extension domain. We write $\Delta_D$ for the self-adjoint Dirichlet Laplacian on $L^2(\Omega)$ and denote its spectrum by $\sigma(\Delta_D)$. Recall that $\sigma(\Delta_D)\subset (-\infty,0)$ is pure point with eigenvalues accumulating at minus infinity. 

Given $k\in\R$ and $f\in \Hp$, we call $u\in H^1(\Omega)$ a \emph{weak solution} of the Dirichlet problem 
\begin{equation}\label{EqDOk}
\begin{cases}
-\Delta u+ku&=0\quad \text{in $\Omega$}\\
\qquad u|_{\partial\Omega}&=f
\end{cases}
\end{equation}
if $\TTr u=f$ and $\left\langle u,v\right\rangle_{\dot{H}^1(\Omega)}+k\left\langle u,v\right\rangle_{L^2(\Omega)}=0$ for all $v\in H^1_0(\Omega)$. Problem \eqref{DO} is the special case for $k=1$ of problem~\eqref{EqDOk}.
 
If $k\in \R\backslash \sigma(\Delta_D)$, then for any $f\in \Hp$ there is a unique weak solution $u_f\in H^1_\Delta(\Omega)$ of \eqref{EqDOk}. For such $k$ one can define a linear operator $\Ascr_k: \Hp \to \Hm$ by 
\[\Ascr_k f:=\frac{\partial u_f}{\partial\nu}.\]
This operator is called the \emph{Poincar\'e-Steklov} (or \emph{Dirichlet-to-Neumann}) \emph{operator} associated with $(\Delta-k)$ on $\Omega$. See for instance~\cite{ARENDT-2012,ARFI-2019,FRIEDLANDER-1991,ROZANOVA-PIERRAT-2021} for studies of Poincar\'e-Steklov operators under more restrictive assumptions on~$\Omega$. For the $C^\infty$ case, it is the elliptic self-adjoint pseudo-differential operator of the first order $A_k: C^\infty(\del \Omega) \to C^\infty(\del \Omega)$~\cite[11-12 Chapter 7]{TAYLOR-1996}. For a bounded Lipschitz domain, the Dirichlet-to-Neumann operator $\Ascr_k: H^\frac{1}{2}(\del \Omega)\to H^{-\frac{1}{2}}(\del \Omega)$ is a linear continuous equal to its adjoint operator. In our general framework of $H^1$-extension domains we have the analoguous results~\cite{ARFI-2019,CLARET-2025}:
\begin{lemma}\label{L:PS}
Let $\Omega$ be a bounded $H^1$-extension domain. Then the following assertions hold:
\begin{enumerate}
\item[(i)] For any $k\in \R\backslash \sigma(\Delta_D)$, the Poincar\'e-Steklov operator $\Ascr_k:\Hp\to \Hm$ is a bounded linear operator and coincides with its adjoint. It is injective if and only if $k$ is not an eigenvalue of the self-adjoint Neumann Laplacian for $\Omega$.
\item[(ii)]   The Poincar\'e-Steklov operator $\Ascr_1:\Hp\to \Hm$ satisfies $\Ascr_1=\partial_{\nu} \circ(\Tr)^{-1}$. It is an isometry with inverse $\Ascr_1^{-1}:\Hm\to \Hp$ given by $\Ascr_1^{-1}=\Tr\circ \Nscr$.
\end{enumerate}
\end{lemma}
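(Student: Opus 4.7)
The plan is to derive both assertions from three ingredients already at hand: the isometries $E_D=(\dTr)^{-1}:\Hp\to V_1(\Omega)$ and $\dTr:V_1(\Omega)\to\Hp$ from Theorem~\ref{Trisom}(iii); the normal-derivative definition via Green's formula~\eqref{EqGreenInt}, with its continuity bound~\eqref{E:boundnormalder}; and Corollary~\ref{C:Di}, which identifies $\del_\nu=\Nscr^{-1}$ as an isometry from $V_1(\Omega)$ onto $\Hm$. I would prove (ii) first, since it is a direct chain of existing isometries, and then exploit the same formalism for (i).

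For (ii), observe that when $k=1$ the unique weak solution of~\eqref{EqDOk} with boundary data $f\in\Hp$ is precisely $u_f=E_D f\in V_1(\Omega)$, by uniqueness and the very definition of $V_1(\Omega)$. Hence $\Ascr_1 f=\del_\nu(E_D f)$, giving the factorisation $\Ascr_1=\del_\nu\circ(\dTr)^{-1}$. Both $(\dTr)^{-1}:\Hp\to V_1(\Omega)$ and $\del_\nu:V_1(\Omega)\to\Hm$ are isometries onto, so $\Ascr_1$ is an isometry onto, and composing the inverses gives $\Ascr_1^{-1}=\dTr\circ\del_\nu^{-1}=\dTr\circ\Nscr$.

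For (i), the key step is the a priori bound on $\Ascr_k$. For $k\notin\sigma(\Delta_D)$, Fredholm theory applied to the bilinear form $(u,v)\mapsto\int_\Omega\nabla u\cdot\nabla v\,\dx+k\int_\Omega uv\,\dx$ on $H^1_0(\Omega)$ yields a bounded solution map $f\mapsto u_f\in H^1_\Delta(\Omega)$ with $\Delta u_f=ku_f$, after which~\eqref{E:boundnormalder} gives boundedness of $\Ascr_k:\Hp\to\Hm$. Self-adjointness follows by inserting $u_g$ as the test function in~\eqref{EqGreenInt} applied to $u_f$:
\[
\langle\Ascr_k f,g\rangle_{\Hm,\Hp}=\int_\Omega(\Delta u_f)\,u_g\,\dx+\int_\Omega\nabla u_f\cdot\nabla u_g\,\dx=k\int_\Omega u_f u_g\,\dx+\int_\Omega\nabla u_f\cdot\nabla u_g\,\dx,
\]
which is symmetric in $(f,g)$. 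For the Neumann characterisation, the identity $\Ascr_k f=0$ combined with Green's formula and $\Delta u_f=ku_f$ rewrites as $\int_\Omega\nabla u_f\cdot\nabla v\,\dx+k\int_\Omega u_f v\,\dx=0$ for every $v\in H^1(\Omega)$, i.e.\ $u_f$ is a Neumann eigenfunction for the eigenvalue $k$; injectivity then reduces to showing $f=\dTr u_f\neq 0$ whenever such a nonzero $u_f$ exists, since if $\dTr u_f=0$ then $u_f\in H^1_0(\Omega)$ would be a Dirichlet eigenfunction for $k$, contradicting $k\notin\sigma(\Delta_D)$.

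The main obstacle I anticipate is the rigorous meaning of the ``self-adjoint Neumann Laplacian'' on these very irregular domains: one must verify that the closed quadratic form $u\mapsto\|\nabla u\|^2_{L^2(\Omega)}$ on $H^1(\Omega)$ defines a self-adjoint operator on $L^2(\Omega)$ whose spectrum matches the set of $k$ arising in the eigenfunction equation above, and that its resolvent is compact on bounded $H^1$-extension domains so that the Fredholm alternative is available at each $k\in\R\setminus\sigma(\Delta_D)$. Modulo this largely standard spectral-theoretic verification, the proof is a streamlined application of Theorem~\ref{Trisom} and Corollary~\ref{C:Di}.
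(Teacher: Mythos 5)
Your proposal is correct and follows essentially the route the paper intends: part (ii) is the composition of the isometries $(\dTr|_{V_1(\Omega)})^{-1}$ and $\del_\nu=\Nscr^{-1}$ from Theorem~\ref{Trisom} and Corollary~\ref{C:Di}, and part (i) combines the bounded solution map for $k\notin\sigma(\Delta_D)$ with the bound \eqref{E:boundnormalder}, the symmetry of the Green identity \eqref{EqGreenInt}, and the Dirichlet/Neumann eigenfunction dichotomy, exactly as in the cited sources. The only point worth tightening is to note explicitly that the compact embedding $H^1_0(\Omega)\hookrightarrow L^2(\Omega)$ (valid for any bounded domain) is what licenses the Fredholm step, and that your ``Neumann eigenvalue $k$'' is an eigenvalue of $\Delta_N$ (not $-\Delta_N$), consistent with the paper's sign convention for $\sigma(\Delta_D)$.
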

\begin{remark}
	 In the measure-free framework, associated as seen previously, with isometry and bijective properties of the trace operator, we can also generalize the operators of layer potentials~\cite{CLARET-2025}.
The main idea is to use all notions in the weak/variational sense. 
\end{remark}
\subsection{$L^2(\bord,\mu)$-framework}\label{SSPoincarL2}

If $u\in H^2(\Omega)$,~\cite[p. 2]{GIRAULT-1986}, then $\frac{\partial u}{\partial x_i}\in H^1(\Omega)$ and
\[\frac{\partial u}{\partial\nu}=\sum_{i=1}^n\TTr \left(\frac{\partial u}{\partial x_i}\right)\nu_i \in
L^2(\partial\Omega,\sigma),\] where $\sigma$ is the surface measure on $\partial\Omega$ and 
$\nu=(\nu_1,...,\nu_n)$ is the $\sigma$-a.e. defined outward unit normal on $\partial\Omega$,~\cite[Section I.1, Lemma 1.4]{GIRAULT-1986}. Therefore, it is natural to consider the Poincar\'e-Steklov operator $A_k: H^1(\del \Omega,\sigma)\to L^2(\del \Omega,\sigma)$ for $C^1$-boundaries. For the Lipschitz non-convex domains, we do not have anymore the $H^2$-regularity of the weak solutions of the ellipric boundary value problems, thus it is more natural to consider the Poincar\'e-Steklov $A_k: L^2(\partial\Omega,\sigma)\to  L^2(\partial\Omega,\sigma)$, also in the aim to work in the same Hilbert space. For the general class of boundaries, possibly non-Lipschitz, we work in the assumptions of Theorem~\ref{ThTrComp} for bounded domains. Thus, following the same ideas as in~\cite{ARENDT-2011,ARFI-2019} we also able to define $A_{k}$ as an unbounded operator with a compact resolvent on $L^2(\del \Omega,\mu)$ in the case $k\in \R\setminus \sigma(-\Delta_D)$.  

Using the Gelfand triple $\B(\bord)\subset L^2(\del \Omega,\mu) \subset \B'(\del \Omega)$, the consideration of the Poincaré-Steklov operator as operator from $L^2(\del \Omega,\mu)$ to $L^2(\del \Omega,\mu)$ leads to the additional difficulty: to ensure $\frac{\del}{\del \nu}u\in L^2(\del \Omega,\mu)$.  
Thus, we need to update the definition of the interior normal derivative in the sense of $L^2$.
 
\begin{definition}\label{DefL2NormDeri}\textbf{($L^2$-normal derivative)}
Let $\Omega$ be an $H^1$-extension domain of $\R^n$  and $\mu$ be a finite Borel measure with the compact support ${\del \Omega}=\supp\mu$ satisfying \eqref{Eqmu} for $d\in (n-2,n)$. Let in addition $u \in H^1_\Delta(\Omega)$. 
If there exists $\psi \in L^2(\del \Omega,\mu)$ such that for all $v \in H^1(\Omega)$ it holds:
\begin{equation}
\label{EqDerNF}
\int_{\Omega}{(\Delta u)v\,\dx}+ \int_{\Omega}{\nabla u \cdot \nabla v\,\dx} = \int_{\del \Omega}{\psi \TTr v\,\dmu},
\end{equation}
then $\psi$ is called  a  $L^2$-normal derivative of $u$ on $\del \Omega$, denoted,  as previously, by   $\frac{\del u}{\del \nu}  = \psi$. 

\end{definition}

\begin{remark}
If $\frac{\del u}{\del \nu} \in L^2(\bord,\mu)$ 
exists for some $u\in H^1_\Delta(\Omega)$, 
then it is unique.
\end{remark}

To define the Dirichlet-to-Neumann operator on $L^2(\del \Omega,\mu)$ for the solutions of~\eqref{EqDOk} associated to $-\Delta+k$, 
we follow~\cite{ARENDT-2007}, and assume that $\Omega$ is a bounded $H^1$-extension domain. 

This time, the trace operator $\mathrm{Tr}$ is considered from  $H^1(\Omega)$ to $L^2(\del \Omega,\mu)$, which makes it compact. From \cite[Theorem 1]{HINZ-2021-1}, we know the compactness of the embedding $H^1(\Omega)\hookrightarrow L^2(\Omega)$ holds for all bounded $H^1$-extension domains.

In~\cite[p. 6]{ARENDT-2012}), for all $k\in \R$ and $\lambda\in \R$ such that $k$ and $\lambda+k$ are not eigenvalues of the Dirichlet Laplacian,   the space
\begin{equation}\label{EqSpaceHk}
	H_{k,\lambda}(\Omega)=\{u\in H^1(\Omega)\;|\; (\Delta-k)u=\lambda u \hbox{ in the sense of distributions}\}
\end{equation}
was introduced.
In the same way as explained in~\cite[Lemma~2.2]{ARENDT-2012} or~\cite[Lemma~3.2]{ARENDT-2007}, with $\operatorname{Ker} \TTr=H_0^1(\Omega)$, we conclude that:
\begin{equation}\label{EqDecompH1}
	H^1(\Omega)=H^1_0(\Omega)\oplus H_{k,\lambda}(\Omega),
\end{equation}
which directly implies that $V_1(\Omega)=H_{k,\lambda}(\Omega)$ for all $k\in \R$ and $\lambda\in \R$ such that $k$ and $\lambda+k$ are not eigenvalues of the Dirichlet Laplacian.
 Therefore, decomposition~\eqref{EqDecompH1} of $H^1(\Omega)$ and the boundary measure properties satisfying Theorem~\ref{ThGTripleInj} ensure that the trace operator $\Tr=\TTr|_{H_{k,\lambda}(\Omega)}: H_{k,\lambda}(\Omega) \to L^2(\del \Omega,\mu)$ is injective. In addition, since $H_{k,\lambda}(\Omega)$ is a closed subspace of $H^1(\Omega)$, the injection $H_{k,\lambda}(\Omega) \to L^2(\Omega)$ is compact as soon as $\Omega$ is a bounded $H^1$-extension domain ($i.e.$ for which $H^1(\Omega)\hookrightarrow L^2(\Omega)$,~\cite{ARFI-2019,HINZ-2021,HINZ-2021-1}).

Then, we introduce the Dirichlet-to-Neumann operator $A_{k}:L^2(\del \Omega,\mu) \to L^2(\del \Omega,\mu)$ as the operator associated with the following bilinear form:
\begin{equation}\label{EqbilFormDefAL2}
a(\Tr u, \Tr v):=	\int_{\Omega} \nabla u\cdot\nabla v\,\dx+k\!\int_\Omega u v\,\dx, \quad \hbox{ for } u,v \in H_{k,0}(\Omega). 
\end{equation}
The definition comes from the following relation, resulting from Green's formula:
$$a(\Tr u, \Tr v)=\left\langle \del_\nu u, \Tr v\right\rangle_{\B'\!,\,\B}=\left\langle\frac{\del u}{\del \nu}, \Tr v\right\rangle_{L^2(\del \Omega,\mu)}.$$
By its definition, $a(\cdot,\cdot)$ is symmetric and continuous on $H_{k,0}(\Omega)$ ($i.e.$ on $V_1(\Omega)$).
In addition, by~\cite[Proposition~3.3]{ARENDT-2007} the form $a(\cdot,\cdot)$ is elliptic in the following sense: 
there exists $\eta(k)\ge0$ such that for all $u\in H_{k,0}(\Omega)$,
\begin{equation}\label{EqPositiva}
a(\Tr u, \Tr u)+\eta \|\!\Tr u\|^2_{L^2(\del \Omega,\mu)}\ge \frac{1}{2} \|u\|^2_{H^1(\Omega)}.
\end{equation}
That property of $a$ is based on the compactness of the embedding $H_{k,0}(\Omega)\hookrightarrow L^2(\Omega)$  
and on the injectivity of the trace from $H_{k,0}(\Omega)$ to $L^2(\del \Omega,\mu)$.

Therefore, we prove the following theorem.
\begin{theorem}\label{ThPSL2clas} Let $\Omega$ be a bounded $H^1$-extension domain of $\R^n$  and $\mu$ be a finite Borel measure with the compact support ${\del \Omega}=\supp\mu$ satisfying \eqref{Eqmu} for $d\in (n-2,n)$.
Let $k\in \R_+$ be outside the spectrum of $-\Delta$ with Dirichlet boundary conditions.
Then, the  Dirichlet-to-Neumann  operator $A_{k}$ for problem~\eqref{EqDOk}, considered  from $L^2(\del \Omega,\mu)$ to $L^2(\del \Omega,\mu)$, is 
the operator 
$A_{k}f=\frac{\del u}{\del \nu}$ in $L^2(\del \Omega,\mu)$ with 
\begin{multline*}
 dom(A_{k})=\big\{f\in L^2(\del \Omega,\mu)\;\big|\; \exists u\in H^1(\Omega) \hbox{ such that } \\\TTr u=f, \quad (-\Delta +k)u=0 \quad\hbox{and}\quad \exists \frac{\del u}{\del \nu} \in L^2(\del \Omega,\mu)\big\}
\end{multline*}
and is a self-adjoint positive operator with a compact resolvent. Therefore, $A_{k}$ has a discrete spectrum with eigenvalues
$$0=\lambda_0<\lambda_1\le \lambda_2\le \ldots, \quad \hbox{with } \lambda_j\to +\infty \; j\to +\infty$$
and the corresponding eigenfunctions form an orthonormal basis in $L^2(\del \Omega,\mu)$.
\end{theorem}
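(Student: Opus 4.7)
The plan is to construct $A_k$ by applying the representation theorem for closed semibounded symmetric forms to the sesquilinear form $a(\cdot,\cdot)$ of~\eqref{EqbilFormDefAL2}, transported to $L^2(\bord,\mu)$ via the trace. First I would transport the form: by the decomposition~\eqref{EqDecompH1}, $V_1(\Omega)=H_{k,0}(\Omega)$, and the injectivity part of Theorem~\ref{ThGTripleInj} together with $\ker\TTr=H^1_0(\Omega)$ ensures that $\Tr:=\TTr|_{V_1(\Omega)}:V_1(\Omega)\to L^2(\bord,\mu)$ is injective. I then set $D(\tilde a):=\Tr(V_1(\Omega))=\Hp$ and $\tilde a(\phi,\psi):=a(u,v)$ with $u=\Tr^{-1}\phi$, $v=\Tr^{-1}\psi$. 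Density of $D(\tilde a)$ in $L^2(\bord,\mu)$ is Theorem~\ref{ThTrComp}, and symmetry is inherited from $a$.

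Second, I would establish semiboundedness and closedness. The ellipticity~\eqref{EqPositiva} gives
\[
\tilde a(\phi,\phi)+\eta\|\phi\|^2_{L^2(\bord,\mu)}\ge\tfrac12\|u\|^2_{H^1(\Omega)},
\]
so $\tilde a\ge -\eta$, and combined with the trivial upper bound $\tilde a(\phi,\phi)\le (1+k)\|u\|_{H^1(\Omega)}^2$ shows that the form norm on $D(\tilde a)$ is equivalent, via $\Tr^{-1}$, to the $H^1$-norm on $V_1(\Omega)$. Since $V_1(\Omega)$ is closed in $H^1(\Omega)$, this promotes completeness from $V_1(\Omega)$ to $(D(\tilde a),\tilde a_\eta)$, hence $\tilde a$ is closed. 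The representation theorem then yields a unique self-adjoint semibounded operator $A_k$ in $L^2(\bord,\mu)$ with $(A_k\phi,\psi)_{L^2(\bord,\mu)}=\tilde a(\phi,\psi)$ for $\phi\in D(A_k)$, $\psi\in D(\tilde a)$.

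Third, I would identify $A_k$ with the Dirichlet-to-Neumann map and deduce compact resolvent. For $\phi\in D(A_k)$ and $u=\Tr^{-1}\phi\in V_1(\Omega)$, one has $\Delta u=ku\in L^2(\Omega)$, so $u\in H^1_\Delta(\Omega)$. Writing an arbitrary $v\in H^1(\Omega)$ as $v_0+v_1\in H^1_0(\Omega)\oplus V_1(\Omega)$, the $v_0$-component contributes zero on both sides (the right-hand side because $\Tr v_0=0$, the left because $(-\Delta+k)u=0$ in $\mathcal{D}'(\Omega)$), so
\[
\int_\Omega \nabla u\cdot\nabla v\,\dx+k\!\int_\Omega uv\,\dx=(A_k\phi,\TTr v)_{L^2(\bord,\mu)}.
\]
Substituting $\Delta u=ku$ recovers exactly~\eqref{EqDerNF} with $\psi=A_k\phi$, so $A_k\phi=\frac{\del u}{\del\nu}$ in the sense of Definition~\ref{DefL2NormDeri}. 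Positivity for $k>0$ follows from $\tilde a(\phi,\phi)=\|\nabla u\|^2_{L^2(\Omega)}+k\|u\|^2_{L^2(\Omega)}\ge 0$, equality forcing $u=0$ and hence $\phi=0$. Finally, the embedding $(D(\tilde a),\tilde a_\eta^{1/2})\hookrightarrow L^2(\bord,\mu)$ is, after the isometric identification with $V_1(\Omega)$, precisely the trace map $\Tr:V_1(\Omega)\to L^2(\bord,\mu)$, which is compact by Theorem~\ref{ThTrComp}. Hence $A_k$ has compact resolvent, and standard spectral theory delivers the discrete spectrum accumulating at $+\infty$ and the orthonormal basis of eigenfunctions.

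The main obstacle is the closedness of $\tilde a$: it rests on the ellipticity estimate~\eqref{EqPositiva}, which in turn requires the injectivity of $\Tr|_{V_1(\Omega)}$ supplied by Theorem~\ref{ThGTripleInj} — without the Gelfand-triple hypothesis there is no way to translate the $L^2(\bord,\mu)$-control back into $H^1$-control of the preimage, and the form would only be closable in general.
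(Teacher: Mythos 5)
Your proposal is correct and follows essentially the same route as the paper: both construct $A_k$ as the operator associated with the elliptic form~\eqref{EqbilFormDefAL2} (using the injectivity of $\TTr|_{V_1(\Omega)}$ and the ellipticity estimate~\eqref{EqPositiva}), identify it with the Dirichlet-to-Neumann map through Definition~\ref{DefL2NormDeri} by extending the weak identity from $H_{k,0}(\Omega)$ to all of $H^1(\Omega)$, and deduce the compact resolvent from the compactness of the trace operator. You merely make explicit the closedness and representation-theorem steps that the paper subsumes in the phrase ``the operator associated to the bilinear continuous elliptic form.''
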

\begin{proof}

Let  $L$ be the operator associated to the bilinear continuous elliptic form $a(\cdot,\cdot)$ defined in~\eqref{EqbilFormDefAL2}. Let us show that $L$ is the Dirichlet-to-Neumann operator $A_{k}$. For $u\in H_{k,0}(\Omega)$ and $g\in L^2(\del \Omega,\mu)$ it holds $\Tr u\in L^2(\del \Omega,\mu)=:dom(L)$,  
therefore $L(\TTr u)= g$ if and only if for all $v\in H_{k,0}(\Omega)$ 
\begin{equation}\label{EqNormD}
\int_\Omega{\nabla u\cdot\nabla v\,\dx}+k\!\int_\Omega{uv\,\dx}=\int_{\del \Omega}{g \TTr v\ \mbox{d}\mu}. 
\end{equation}
First, let us suppose~\eqref{EqNormD}.
Then we notice that not only does it hold on $H_{k,0}(\Omega)$, but also on $H^1(\Omega)$, since for all $v\in H^1_0(\Omega)$ (as $-\Delta u+k u=0$ in the distributional sense):
\begin{equation}\label{EqNorm0}
\int_\Omega \nabla u\cdot\nabla v\,\dx+k\!\int_\Omega uv\,\dx=0=\int_{\del \Omega}{g \TTr v\,\dmu}. 
\end{equation}
By equation~\eqref{EqDOk} for $u$, we have
\begin{equation}
\int_\Omega \nabla u\cdot\nabla v\,\dx+\int_\Omega \Delta u v\,\dx=\int_{\del \Omega}{\psi \TTr v\,\dmu},
\end{equation}
which, by Definition~\ref{DefL2NormDeri}, means that for $u\in H^1(\Omega)$, thus with its trace $\TTr u\in L^2(\del \Omega,\mu)$ (it follows simply from $\TTr (H^1(\Omega))\subset L^2(\del \Omega,\mu)$), we have $\frac{\del u}{\del \nu}=g \in  L^2(\del \Omega,\mu)$, and finally, $L(\TTr u)= g$, ensuring that $\TTr u\in dom(L)$. This means that $L=A_{k}$.

Conversely, let us suppose that $f\in L^2(\del \Omega,\mu)$ and $A_k(f)=g\in L^2(\del \Omega,\mu)$. Then there exists $u\in H_{k,0}(\Omega)$ such that
\begin{equation*}
\Tr u=f,\quad\hbox{and}\quad \del_\nu u= \frac{\del u}{\del \nu}=g.
\end{equation*}
Then for all $v\in H^1(\Omega)$
\begin{align*}
\int_\Omega{\nabla u\cdot\nabla v\,\dx}+k\!\int_\Omega {uv\,\dx} &= \int_\Omega{\nabla u\cdot\nabla v\,\dx}+\int_\Omega{\Delta u v\,\dx}\\
&=\int_{\del \Omega}{\frac{\del u}{\del \nu} \TTr v\,\dmu}
=\int_{\del \Omega}{g \TTr v\,\dmu}.
\end{align*}
Thus it follows that $f\in dom(L)$ and $Lf=g$.

The compactness of the trace operator $\TTr: H^1(\Omega)\to L^2(\del \Omega,\mu)$ implies that $A$ has a compact resolvent. Thus, applying the Hilbert-Schmidt Theorem to complete the proof is sufficient.
\end{proof}

An element $f\in L^2(\bord,\mu)$ is in the domain $dom(A_{k})$ of the Poincaré-Steklov operator if and only if $(f, A_{k}f)\in [L^2(\bord,\mu)]^2$ is in the so-called Calderón subspace \cite{ARENDT-2012} defined as follows:
\begin{multline*}
\mathcal{C}_k=\left\{(f,g)\in [L^2(\del \Omega,\mu)]^2\;\left|\; \exists u\in H^1(\Omega) \text{  such that}\right.\right.\\\left.\left.(-\Delta +k)u=0,\;\TTr u|_{\del \Omega}=f \;\hbox{and}\; \left.\frac{\del u}{\del \nu}\right|_{\del \Omega}=g\right.\right\}.
\end{multline*}
For all $k\in \R$, the Calderón space is defined as the set of the boundary data in $L^2$ for which a weak solution exists to the Cauchy problem for $-\Delta+k$. As mentioned in~\cite{MAGOULES-2021,MAGOULES-2025}, by analogy to~\cite[Theorem~1.2,~p.12]{DARDE-2010}, we have the uniqueness (with no assumption of existence) of the solution to the Cauchy problem for all $k\in\R$.  
On the other hand, if we set $f$ and consider all $g=g(f)$ for which there exists a solution to the Dirichlet problem with normal derivative equal to $g$, the solution $u$ is unique if and only if $k\in\R\setminus \sigma(-\Delta_D)$. This is also connected to the fact that the Cauchy problem is not well-posed in the sense of Hadamard and, for example, the mapping $u\in V_1(\Omega)\mapsto (\Tr u|_{\del \Omega}, \del_\nu u|_{\del \Omega})\in \Hp\times \Hm$ is injective but not onto.

Following~\cite{ARENDT-2012}, the Calderón space, defined for all $k\in \R$, also allows us to define the Poincaré-Steklov operator for all $k\in \R$. 
If $k\notin  \sigma(-\Delta_D)$, then $\mathcal{C}_k\cap (\{0\}\times L^2(\del \Omega,\mu))=\{(0,0)\}$
and there exists a densely defined closed operator $A_{k}$ on $L^2(\del \Omega,\mu)$ the graph of which is $\mathcal{C}_k$ (see Theorem~\ref{ThPSL2clas}).

Now, let $k$ be an eigenvalue of the Dirichlet Laplacian.
Then, by the Hilbert-Schmidt theorem, a finite number of eigenfunctions corresponding to $k$ exists. We consider the closed space of their $L^2$-normal derivatives:
\begin{align*}
P(k)&=\{h\in L^2(\del \Omega,\mu)\;|\; (0,h)\in \mathcal{C}_k\}\\
&=\left.\left\{\left.\ddny{v}{i}\right|_{\del \Omega}\;\right|\; v\in \operatorname{Ker}(-\Delta_D+k),\; \left.\ddny{v}{i}\right|_{\del \Omega}\in L^2(\del \Omega,\mu)\right\}.
\end{align*}
Therefore, we have $L^2(\del \Omega,\mu)=P(k)\oplus P(k)^\perp$. Denoting by $L^2_k(\del \Omega,\mu)$ the orthogonal of $P(k)$, $L^2_k(\del \Omega,\mu)$ is an infinite-dimensional closed subspace of $L^2(\del \Omega,\mu)$.
Thus, we can define the Poincaré-Steklov operator $A_{k}$ on the space $L^2_k(\del \Omega,\mu)$, which becomes equal to $L^2(\del \Omega,\mu)$ for $k\notin \sigma(-\Delta_D)$. 
\begin{theorem}\label{ThPSL2}
Let $\Omega$ be a bounded $H^1$-extension domain of $\R^n$  and $\mu$ be a finite Borel measure with the compact support ${\del \Omega}=\supp\mu$ satisfying \eqref{Eqmu} for $d\in (n-2,n)$.
Then, for all $k\in \R$, the Calderón space is a closed subspace of $L^2(\del \Omega,\mu)\times L^2(\del \Omega,\mu)$ and the Poincaré-Steklov operator $A_{k}$ is the unique self-adjoint operator with a compact resolvent on $L^2_k(\del \Omega,\mu)$ with a graph given by $$\{(f,A_{k}f)\}=\mathcal{C}_k\cap (L^2_k(\del \Omega,\mu)\times L^2_k(\del \Omega,\mu)).$$
	\end{theorem}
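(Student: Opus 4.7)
My plan is to split the statement into three pieces: (a) closedness of $\mathcal{C}_k$ in $L^2(\del\Omega,\mu)\times L^2(\del\Omega,\mu)$; (b) identification of $\mathcal{C}_k\cap(L^2_k\times L^2_k)$ as a graph; (c) self-adjointness of the resulting $A_k$ with compact resolvent. The scheme mirrors the proof of Theorem~\ref{ThPSL2clas}, but must be pushed through the resonant case $k\in\sigma(-\Delta_D)$ by modding out the finite-dimensional kernel $N:=\ker(-\Delta_D+k)\subset H^1_0(\Omega)$ on the $\Omega$-side and its ``Neumann trace'' $P(k)$ on the boundary side.

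For (a), take $(f_n,g_n)\in\mathcal{C}_k$ with $(f_n,g_n)\to(f,g)$ in $L^2(\del\Omega,\mu)\times L^2(\del\Omega,\mu)$, and choose witnesses $u_n\in H^1_\Delta(\Omega)$ with $\TTr u_n=f_n$, $(-\Delta+k)u_n=0$ and $\frac{\del u_n}{\del \nu}=g_n$ in the sense of Definition~\ref{DefL2NormDeri}. These witnesses are unique only up to $N_0:=\{\phi\in N:\frac{\del\phi}{\del \nu}\in L^2(\del\Omega,\mu)\}$, whose Neumann traces span $P(k)$; by subtracting the $L^2(\Omega)$-projection of $u_n$ onto $N_0$ I may arrange $u_n\perp N_0$ in $L^2(\Omega)$ without altering $f_n$ and with $g_n$ shifted only by an element of $P(k)$. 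On the subspace of $u\in H_{k,0}(\Omega)\cap N_0^\perp$ admitting an $L^2$ normal derivative the trace is injective, so the same compactness/injectivity argument that underlies \eqref{EqPositiva} supplies an $\eta\ge 0$ with
\[a(u,u)+\eta\|\TTr u\|_{L^2(\del\Omega,\mu)}^2\ge\tfrac12\|u\|_{H^1(\Omega)}^2.\]
Green's identity~\eqref{EqDerNF} tested at $v=u_n$ gives $a(u_n,u_n)=\int_{\del\Omega}g_n f_n\,\dmu$, hence a uniform $H^1$-bound on $u_n$. Extracting a weak limit $u_n\rightharpoonup u$ in $H^1(\Omega)$ and invoking compactness of $\TTr:H^1(\Omega)\to L^2(\del\Omega,\mu)$ (Theorem~\ref{ThTrComp}) I recover $\TTr u=f$; the equation passes to the distributional limit; and passing to the limit in~\eqref{EqDerNF} tested against arbitrary $v\in H^1(\Omega)$ identifies $\frac{\del u}{\del \nu}=g\in L^2(\del\Omega,\mu)$, so $(f,g)\in\mathcal{C}_k$.

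Step (b) is purely algebraic: if $(0,h)\in\mathcal{C}_k\cap(L^2_k\times L^2_k)$ then $h\in P(k)$ by the definition of $P(k)$ and simultaneously $h\in L^2_k=P(k)^\perp$, so $h=0$; hence $\mathcal{C}_k\cap(L^2_k\times L^2_k)$ is the graph of a single-valued linear operator $A_k$, closed by~(a). For (c), I would mimic the form construction used inside the proof of Theorem~\ref{ThPSL2clas}: on the subspace $\TTr(H_{k,0}(\Omega)\cap N_0^\perp)\subset L^2_k(\del\Omega,\mu)$, which is dense thanks to Theorem~\ref{ThTrComp} combined with the orthogonal splitting $L^2(\del\Omega,\mu)=P(k)\oplus L^2_k(\del\Omega,\mu)$, define the symmetric form $a_k(\TTr u,\TTr v):=\int_\Omega\nabla u\cdot\nabla v\,\dx+k\int_\Omega uv\,\dx$ for $u,v\in H_{k,0}(\Omega)\cap N_0^\perp$. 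Injectivity of $\TTr$ there makes $a_k$ well-defined, and the coercivity of step~(a) makes it closed and bounded below. The self-adjoint operator $L$ associated with $a_k$ on $L^2_k(\del\Omega,\mu)$ has a compact resolvent thanks to compactness of $\TTr$, and the Green-formula computation from the proof of Theorem~\ref{ThPSL2clas} identifies its graph with $\mathcal{C}_k\cap(L^2_k\times L^2_k)$, whence $L=A_k$. Uniqueness is immediate since an operator is determined by its graph.

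The main obstacle is the resonant case: \eqref{EqPositiva} fails on $H_{k,0}(\Omega)$ because $a$ vanishes on $N$, and one has to check that the quotient by $N_0$ on the $\Omega$-side matches the orthogonal decomposition $L^2(\del\Omega,\mu)=P(k)\oplus L^2_k(\del\Omega,\mu)$ on both the Dirichlet and Neumann sides of the boundary. The characterisation $P(k)=\{h\in L^2(\del\Omega,\mu):(0,h)\in\mathcal{C}_k\}$ is precisely what keeps this bookkeeping consistent, reducing the theorem to a direct transcription of the non-resonant argument in Theorem~\ref{ThPSL2clas}.
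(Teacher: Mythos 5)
Your route is genuinely different from the paper's. The paper does not attempt a quadratic-form construction in the resonant case at all: it solves the complex impedance problem $(-\Delta+k)u=0$ with $is\TTr u-\del_\nu u=h$ on $\del\Omega$, which is uniquely weakly solvable for \emph{every} $k\in\R$ and $s\in\R^*$ (Fredholm alternative plus uniqueness of the Cauchy problem), shows that $R(is):h\mapsto\TTr u$ is compact, and identifies $R(is)=(is-A_k)^{-1}$ on $L^2_k(\del\Omega,\mu)$ as in \cite[Proposition~2]{ARENDT-2012}; self-adjointness then follows from the invertibility of $is-A_k$ for $s\in\R^*$, and the compact resolvent is immediate. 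The detour through complex boundary data is made precisely to bypass the degeneracy of the real form $a$ on the Dirichlet eigenspace, which is where your argument runs into trouble.

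The genuine gap is the coercivity estimate in step (a), on which both your uniform $H^1$ bound and the closedness and lower-boundedness of the form in step (c) rest. Write $N=\ker(-\Delta_D+k)$ and $N_0=\{\phi\in N:\ \frac{\del\phi}{\del\nu}\in L^2(\del\Omega,\mu)\}$. On $H_{k,0}(\Omega)\cap N_0^{\perp}$ the inequality $a(u,u)+\eta\,\|\TTr u\|^2_{L^2(\del\Omega,\mu)}\ge\frac12\|u\|^2_{H^1(\Omega)}$ is simply false whenever $N_0\subsetneq N$: any nonzero $u\in N\cap N_0^{\perp}$ satisfies $\TTr u=0$ and $a(u,u)=\|\nabla u\|^2_{L^2(\Omega)}+k\|u\|^2_{L^2(\Omega)}=0$, while $\|u\|_{H^1(\Omega)}>0$. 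You restrict to the subset of those $u$ admitting an $L^2$ normal derivative, which does exclude these elements, but the compactness argument behind \eqref{EqPositiva} cannot be run on that subset: it extracts a weak $H^1$ limit from a contradicting sequence, and the condition ``$\frac{\del u}{\del\nu}\in L^2(\del\Omega,\mu)$'' is not stable under weak $H^1$ limits, so the limit may land in $N\cap N_0^{\perp}\setminus\{0\}$ and no contradiction arises. The estimate does hold on the \emph{closed} subspace $H_{k,0}(\Omega)\cap N^{\perp}$, where the trace is injective; but if you normalise your witnesses modulo the full kernel $N$ rather than $N_0$, the normal derivative gets shifted by $\frac{\del\phi}{\del\nu}$ with $\phi\in N\setminus N_0$, which lies only in $\B'(\del\Omega)$ and not in $L^2(\del\Omega,\mu)$, so the modified pair leaves $\mathcal{C}_k$. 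Unless you prove $N_0=N$ for the class of domains and measures considered here — which the paper's very definition of $P(k)$ pointedly refrains from assuming, and which amounts to an extra boundary regularity of Dirichlet eigenfunctions that fails already for mildly irregular boundaries — this circle is not closed. The same unproved coercivity also underlies your passage to the limit in \eqref{EqDerNF} in step (a), so the closedness of $\mathcal{C}_k$ is affected as well.
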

The proof is linked to the analogous result to~\cite[Proposition 1]{GREGOIRE-1976}, which also holds in our case by~\cite[Theorem 7.1]{HINZ-2021} (see also~\cite[Theorem~2.1]{MAGOULES-2021}):
for all $h\in L^2(\del \Omega,\mu)$, $k\in \R$ and $s\in \R^*$, there exists a unique $u\in H^1(\Omega)$ such that:
\begin{equation}\label{EqVFComplex}
\forall v\in H^1(\Omega), \quad \int_\Omega \nabla u\cdot\nabla\overline{v}\,\dx+k\int_\Omega u\overline{v}\,\dx=
 \int_{\del \Omega} (is\TTr u -h) \TTr \overline{v}\, \dmu,
\end{equation}
corresponding to the weak well-posedness of the problem $(-\Delta +k)u=0$ in $\Omega$ with $is\Tr u-\del_\nu u=h$ on $\del \Omega$. The result can be easily obtained using the first Fredholm theorem using the injectivity of the Cauchy problem for $-\Delta+k$.
Then the operator $$R(is):h\in L^2(\del \Omega,\mu)\mapsto \TTr u\in L^2(\del \Omega,\mu)$$ is a linear compact operator. Indeed, it can be seen as the composition of the linear bounded operator $h\in L^2(\del \Omega,\mu)\mapsto u\in H^1(\Omega)$ with the solution $u$ of~\eqref{EqVFComplex} (by the closed graph theorem) and the compact trace operator. 
Therefore, as proved in~\cite[Proposition~2]{ARENDT-2012}, for $s\in \R^*$ and $h\in L^2_k(\del \Omega,\mu)$ the resolvent of $A_{k}$ is given by:
\begin{equation*}
 	(is -A_{k})^{-1} h= R(is) h.
 \end{equation*}
The rest of the proof of Theorem~\ref{ThPSL2} also follows without modifications from~\cite[Proposition~2]{ARENDT-2012}. In particular, the self-adjoint property of $A_{k}$ follows from the inversibility of $(is-A_{k})$ for $s\in \R^*$. 

For the definition of the Poincaré-Steklov operator in the $L^2$-framework on unbounded non-Lipschitz domains, see~\cite{ARFI-2019}. The results were presented for $d$-set boundaries, but hold without modifications for all $H^1$-extension domains $\Omega$ of $\R^n$ associated with a finite Borel measure $\mu$ of the compact support ${\del \Omega}=\supp\mu$ satisfying \eqref{Eqmu} for $d\in (n-2,n)$. The main ideas of the construction are given by~\cite{ARENDT-2012-1,ARENDT-2015}.

\section{Approximation: irregular by regular and irregular by irregular cases}\label{SecApprox}
The question about the approximation of a fractal boundary by a more regular, for instance, a Lipschitz boundary is the most relevant in comparing the differences between the corresponding solutions, spectral properties, and energies. Theoretical analysis of the convergence of domains implying the corresponding convergence of solutions, energies, and spectra can also be viewed as the first step to the numerical analysis.
We are mostly interested in these kind of questions for problems with a Robin-type boundary condition on  a part (of a positive capacity) or all boundary:
\begin{equation}\label{EqRC}
	\frac{\del u}{\del \nu}+\alpha \TTr u=0 \hbox{ on }\Gamma \subset \del \Omega,
\end{equation}
where $\alpha$ is a real or complex constant coefficient.
As the trace operator maps to  $L^2(\del \Omega,\mu)$, condition~\eqref{EqRC} makes the normal derivative more regular than just an element of $\B'(\bord)$: it ensures that $\frac{\del u}{\del \nu}\in L^2(\del \Omega,\mu)$.
However, from the convergence of domains point of view, the Robin-type condition makes it necessary to understand the passage to the limit of energies containing the terms of this kind $\|\TTr u_k\|_{L^2(\Gamma_k,\mu_k)}$.
In~\cite[Lemma~2.1]{HINZ-2023} we prove (see also \cite[Theorem 5]{HINZ-2021-1}) 
\begin{lemma}\label{L:traceconvergence}  
Let $D\subset\mathbb{R}^n$ be a bounded Lipschitz domain, $0\leq d\leq n$ and $(n-d)/p<\beta$. Let $(\mu_m)_m$ be a sequence of Borel measures with supports $\Gamma_m=\supp\mu_m$ contained in $\overline{D}$ and such that \eqref{Eqmu} holds for all $m$ with the same constant $c_d$. Suppose that $(\mu_m)_m$ converges weakly to a Borel measure $\mu$. Then $\Gamma:=\supp\mu$ is contained in $\overline{D}$, and if $(v_m)_m\subset H^{\beta,p}(D)$ converges to some $v$ weakly in $H^{\beta,p}(D)$, then 
	\[\lim_{m\to\infty}\int_{\Gamma_{m}} \mathrm{Tr}_{D,\Gamma_m}v_{m}\:d\mu_{m} =\int_{\Gamma} \mathrm{Tr}_{D,\Gamma} v\:d\mu\]
	and
	\[\lim_{m\to\infty}\int_{\Gamma_{m}} |\mathrm{Tr}_{D,\Gamma_m}v_{m}|^p\:d\mu_{m} =\int_{\Gamma} |\mathrm{Tr}_{D,\Gamma} v|^p \:d\mu.\]
\end{lemma}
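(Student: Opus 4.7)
The plan is to combine a uniform trace inequality on the $\Gamma_m$ with Rellich compactness inside $D$ and the weak convergence of $\mu_m$ against continuous functions, via a $3\eps$ argument. First, I would dispose of the geometric claim: testing weak convergence of measures against any $\varphi\in C_c(\R^n\setminus\overline{D})$ gives $\int\varphi\,d\mu=\lim_m\int\varphi\,d\mu_m=0$, so $\supp\mu\subset\overline{D}$. Testing against a cutoff equal to $1$ on $\overline{D}$ also yields at once $M:=\sup_m \mu_m(\Gamma_m)<\infty$.

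Second, the upper $d$-regularity \eqref{Eqmu} and the Jonsson--Wallin trace theorem (the $L^p$-analogue of Theorem~\ref{ThL2denseTr}, applicable since $(n-d)/p<\beta$), combined with the Lipschitz extension $H^{\beta,p}(D)\to H^{\beta,p}(\R^n)$, yield a trace bound
\[
\|\mathrm{Tr}_{D,\Gamma_m}w\|_{L^p(\Gamma_m,\mu_m)}\le C\,\|w\|_{H^{\beta,p}(D)},\qquad w\in H^{\beta,p}(D),
\]
in which $C$ depends only on $n,p,\beta,d,c_d$ and a nonnegative power of $\mu_m(\Gamma_m)\le M$, hence is independent of $m$. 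Crucially, the same estimate holds with any $\beta'\in((n-d)/p,\beta)$ in place of $\beta$. Since $D$ is bounded and Lipschitz, the Rellich--Kondrachov embedding $H^{\beta,p}(D)\hookrightarrow H^{\beta',p}(D)$ is compact, so the weak convergence $v_m\rightharpoonup v$ upgrades to strong convergence in $H^{\beta',p}(D)$. Applied at level $\beta'$, the uniform trace bound then gives
\[
\lim_{m\to\infty}\|\mathrm{Tr}_{D,\Gamma_m}(v_m-v)\|_{L^p(\Gamma_m,\mu_m)}=0,
\]
which by H\"older's inequality with $\mu_m(\Gamma_m)\le M$ also sends $\int_{\Gamma_m}\mathrm{Tr}_{D,\Gamma_m}(v_m-v)\,d\mu_m$ to $0$.

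Third, to handle the remaining term $\int_{\Gamma_m}\mathrm{Tr}_{D,\Gamma_m}v\,d\mu_m$, I would approximate: given $\eps>0$, pick $\varphi\in C(\overline{D})$ with $\|v-\varphi\|_{H^{\beta,p}(D)}<\eps$. Writing $\mathrm{Tr}_{D,\Gamma_m}v=\mathrm{Tr}_{D,\Gamma_m}(v-\varphi)+\varphi|_{\Gamma_m}$ and symmetrically on $\Gamma$, the uniform trace estimate makes the $v-\varphi$ pieces $O(\eps)$ uniformly in $m$, while weak convergence of $\mu_m$ against the continuous function $\varphi$ handles the smooth piece. Letting first $m\to\infty$ and then $\eps\to 0$ gives the first identity. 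For the second identity I would compare $\|\mathrm{Tr}\,v_m\|_{L^p(\mu_m)}^p$ to $\|\mathrm{Tr}\,v\|_{L^p(\mu_m)}^p$ by the triangle inequality in $L^p(\Gamma_m,\mu_m)$ (using the strong convergence above) and then repeat the approximation scheme with the continuous test function $|\varphi|^p\in C(\overline{D})$, using $\bigl||a|^p-|b|^p\bigr|\le p(|a|^{p-1}+|b|^{p-1})|a-b|$ and H\"older's inequality to pass from $L^p$-trace closeness to $L^1$-closeness of $|\mathrm{Tr}\,v|^p$ and $|\varphi|^p$ with respect to $\mu_m$.

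The main obstacle is securing the trace constant $C$ \emph{uniformly} in $m$: one needs the explicit dependence of the $L^p$-trace inequality on $c_d$ and on $\mu_m(\Gamma_m)$, rather than a purely qualitative continuity statement. Once this quantitative inequality is in hand (essentially the $L^p$-version of Theorem~\ref{ThL2denseTr}), the rest reduces to Rellich compactness inside $D$ and the standard $3\eps$ combination with weak convergence of measures on the compact set $\overline{D}$.
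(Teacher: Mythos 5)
Your argument is essentially the proof of this lemma as given in the cited source (\cite[Lemma~2.1]{HINZ-2023}, see also \cite[Theorem~5]{HINZ-2021-1}); the present paper only quotes the statement without reproducing the proof. The three ingredients you identify --- the quantitative Jonsson--Wallin-type trace inequality with constant depending only on $n,p,\beta,d,c_d$ and a power of the total mass, the compact embedding $H^{\beta,p}(D)\hookrightarrow H^{\beta',p}(D)$ for $(n-d)/p<\beta'<\beta$ upgrading weak to strong convergence before taking traces, and the $3\varepsilon$ approximation by restrictions of smooth functions combined with weak convergence of the measures --- are exactly the ones used there. The only detail worth adding is that the limit measure $\mu$ itself satisfies \eqref{Eqmu} (with constant at worst $2^dc_d$, by the portmanteau inequality $\mu(B_r(x))\le\liminf_m\mu_m(B_r(x))$ and a doubling of the radius to reach a point of $\Gamma_m$), which you implicitly use when applying the trace estimate to $v-\varphi$ on $\Gamma$ with respect to $\mu$.
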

\begin{remark}
	For the definition of the spaces $H^{\beta,p}(D)$ we take the same construction as for $H^{1,2}(\Omega)$ introduced in Section~\ref{SSMeasureBF}.  Given $f\in H^{\beta,p}(D)$, we set 
$\mathrm{Tr}_{D,\Gamma}f:=\mathrm{Tr}_{\Gamma}g,$
where $g$ is an element of $H^{\beta,p}(\mathbb{R}^n)$ such that $f=g|_D$ in $\mathcal{D}'(D)$. The notation $\mathrm{Tr}_{\Gamma}$ only means taking the trace operator on $\Gamma$.
\end{remark}
For the convergence of traces, normal derivatives, the Poincaré-Steklov and layer potentials operators in the image of the trace framework with moving spaces see~\cite{ARXIV-CLARET-2024-1,kuwae-2003}.

Let us notice that the weak convergence of the boundary measures is crucial for converging boundary integrals. Let us fix the perimeter measures of the Lipschitz domains converging to another Lipschitz domain (in all convenient senses, as the Hausdorff, characteristic functions, and compact sets~\cite{HENROT-e} and Section~\ref{Append}). The weak limit of the perimeter measures is not necessarily the perimeter measure of the limit Lipschitz domain. This fact is called non-continuity of the perimeters~\cite {HENROT-e}. It implies that the shape optimization problems in the class of Lipschitz domains corresponding to the minimization of the Robin-type energies~\cite{ARXIV-CLARET-2023,HINZ-2021,HINZ-2021-1,MAGOULES-2021} can only ensure the infimum of these energies. The existence of an optimal shape realizing the energy minimum is proved in the class of uniform (or $(\eps,\infty)$-domains) domains, allowing fractal boundaries viewed as the supports of $d$-upper regular measures. For more details see~\cite{HINZ-2021-1,HINZ-2023}. The most important arguments for the existence of at least one optimal shape are the uniform (on all choosen class of domains) constants in the Poincaré-type inequalities; in the stability estimate of the solution and the uniform boundness of the extension operators $E_{\Omega_k}: H^1(\Omega_k)\to H^1(\R^n)$ pour tout $\Omega_k$ in the fixed (compact) class of domains. 

If the energy convergence is usually directly related to Mosco convergence (see Definition~\ref{DefMC}), it does not generally imply spectrum convergence. In~\cite[Theorem~6]{HINZ-2023}, we show the sufficient conditions on the domains and boundary measures convergences implying the spectrum convergence (as sets in the Hausdorff sense, see Section~\ref{Append} for the definition), the convergence of the resolvents (by the operator norm),  the eigenvalues and eigenfunctions. Let $D\subset \mathbb{R}^n$ be a bounded Lipschitz domain. We take  a sequence of $(\varepsilon,\infty)$-domains $(\Omega_m)_m$ ($\Omega_m \subset D$, $\eps>0$ is fixed) and a sequence  $(\mu_m)_m$ of nonzero Borel measures, all satisfying \eqref{Eqmu} with the same constant $c_d$ and with $\Gamma_m=\supp\mu_m\subset \overline{\Omega}_m$, respectively. The mentioned convergences take place for the corresponding weakly well-posed elliptic-type boundary value problems if $\lim_m \Omega_m=\Omega$ in the Hausdorff sense and in the sense of characteristic functions and $\lim_m \mu_m=\mu$ weakly. 
\begin{remark}
Let us notice that this kind of convergence result does not assume that the sequence $(\Omega_m)_m$ is Lipschitz or more regular. It could be or could not be. It means that the case of the approximation of an irregular by another irregular boundary, as irregular by a regular, is not excluded. 
\end{remark}

Fig.~\ref{FigLocNum} illustrates the convergence behavior of the eigenfunctions established in~\cite[Theorem~6]{HINZ-2023} by a particular example of localized eigenfunctions in ``almost the same region'' for  four domains, denoted by $\Omega_j$, $j=1,\ldots 4$, of constant volume with a changing one side as the four first
 prefractal generations of Minkowski (or rectangular) fractal~\cite{DEKKERS-2022,EVEN-1999,ROZANOVA-PIERRAT-2012,SAPOVAL-1993}.
In particular, we also see the stability behavior of the localization phenomena in the prefractal parts of this sequence of domains.
\begin{figure}[!ht]
  \centering
  \includegraphics[width=0.24\linewidth]{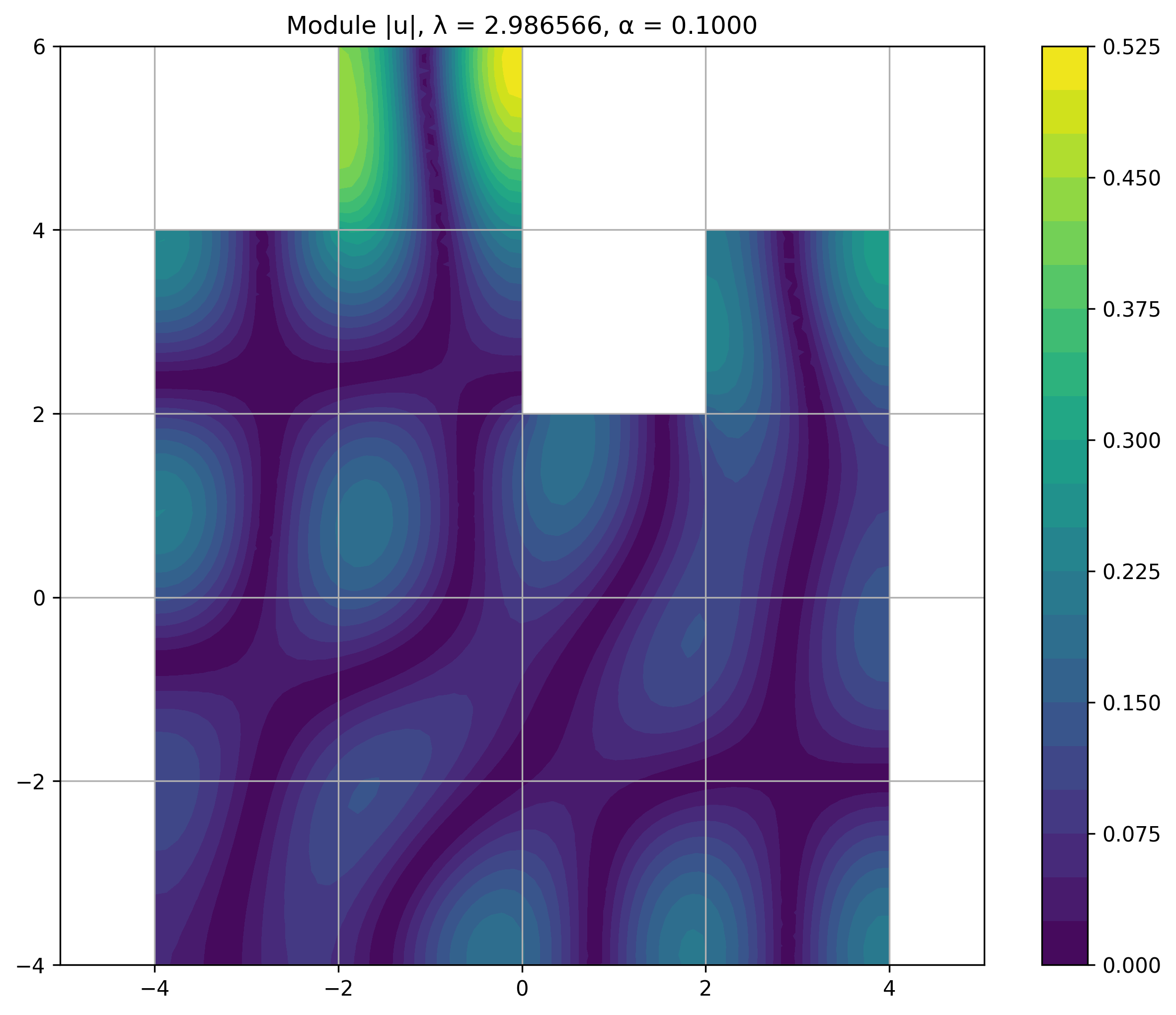}
  \includegraphics[width=0.24\linewidth]{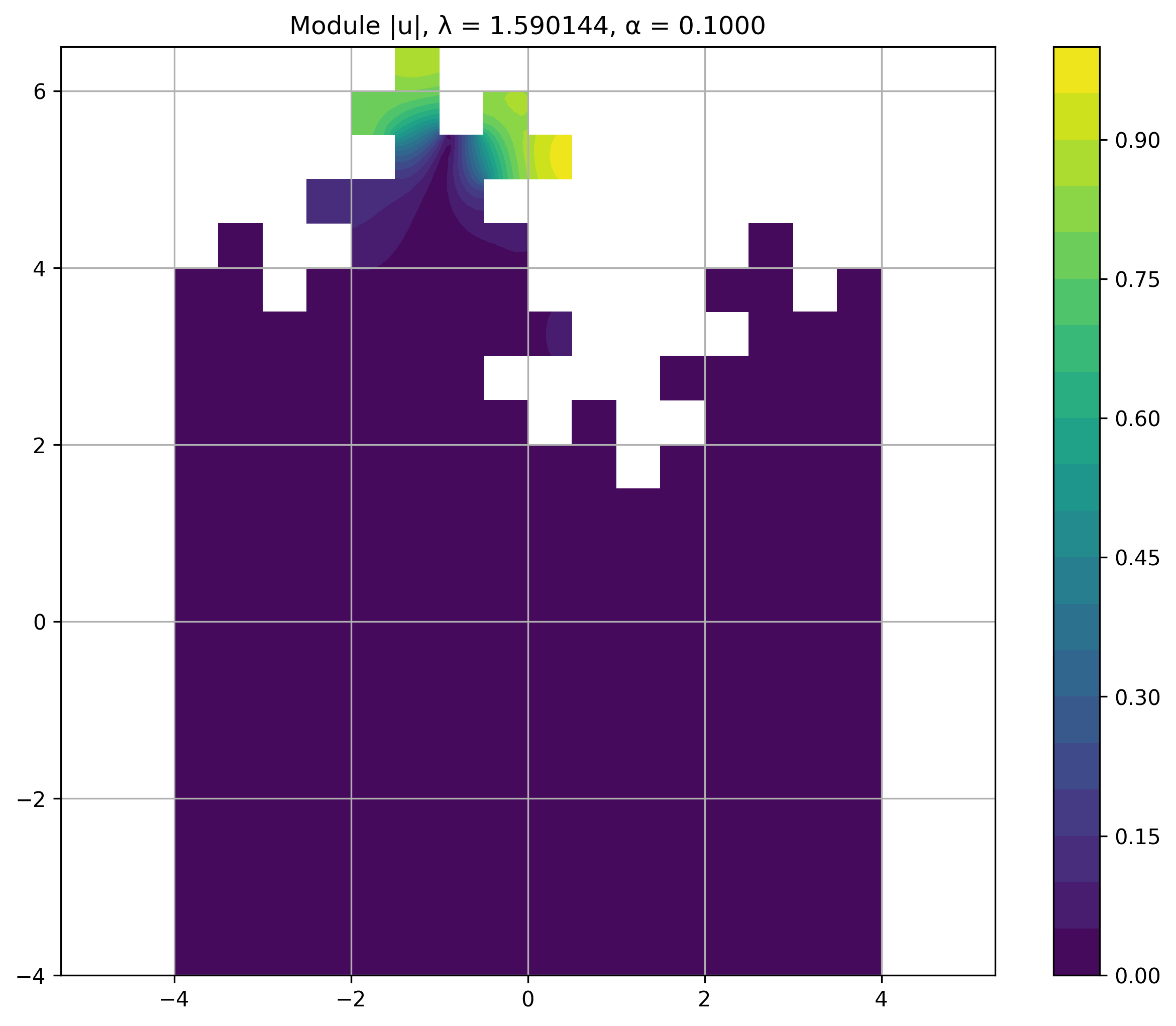}
  \includegraphics[width=0.24\linewidth]{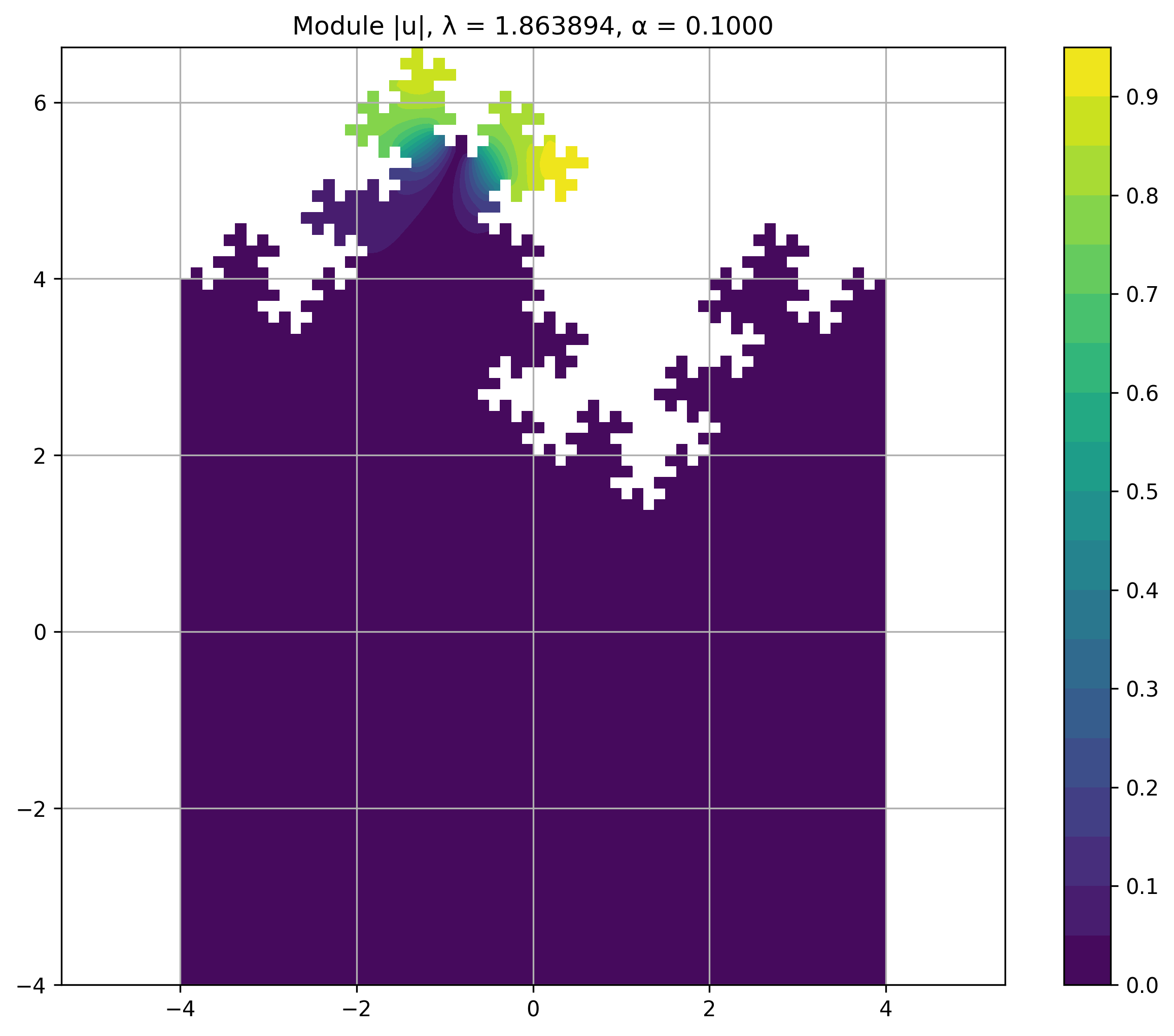}
  \includegraphics[width=0.24\linewidth]{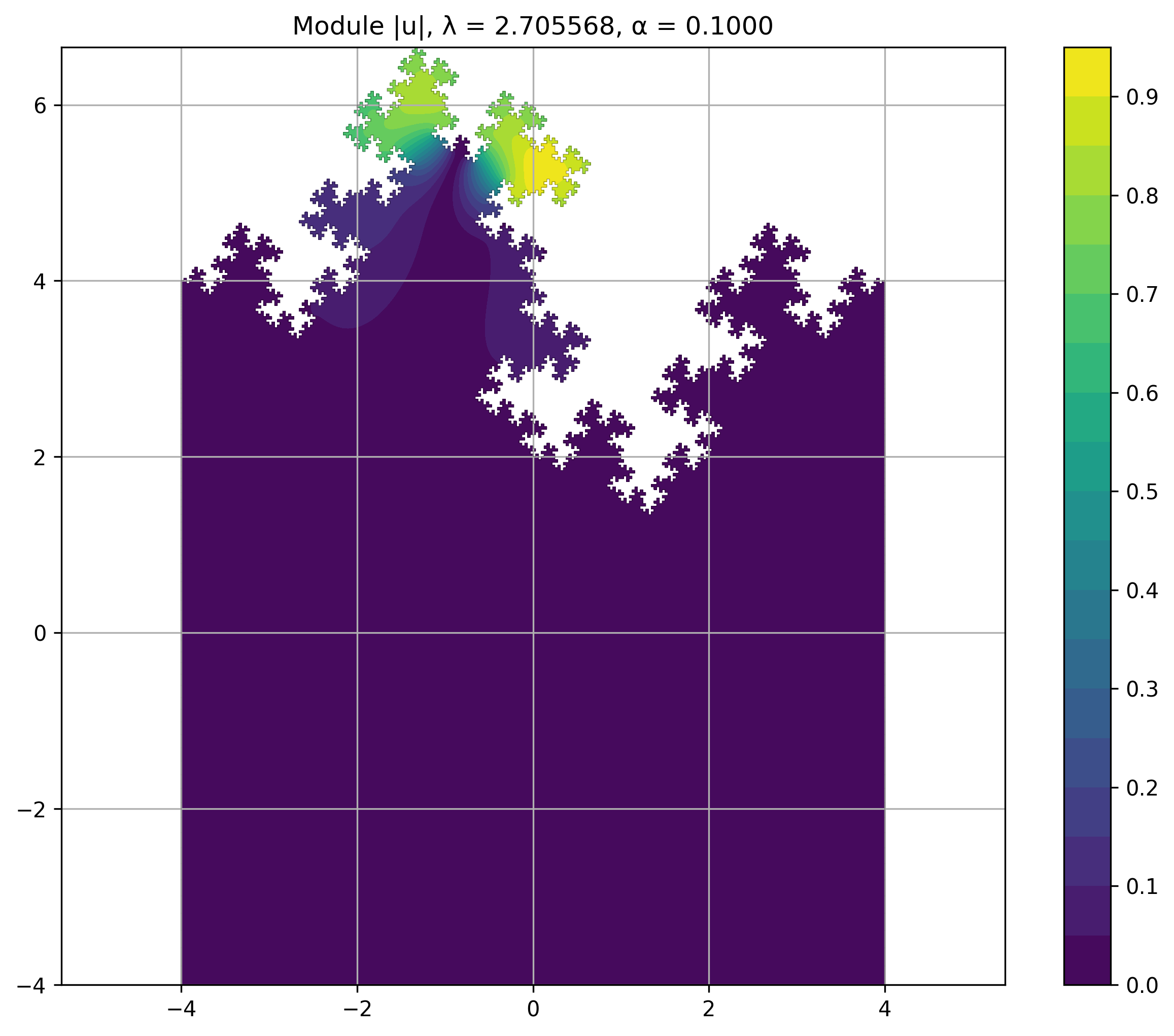}
  \caption{Numerical results by M. Graffin~\cite{GRAFFIN-2025}. We consider the spectral problem for $-\Delta$ with the Robin boundary condition \eqref{EqRC} with $\alpha=0.1$ and perform finite elements using Python. The eigenfunctions $\phi_i^{\Omega_j}$ are plotted in modulus. From the left to right: first prefractal generation $\Omega_1$, $|\phi_{21}^{\Omega_1}|$ associated with the eigenvalue $\lambda \approx 2.99$; second prefractal generation, $|\phi_{14}^{\Omega_2}|$ associated with the eigenvalue $\lambda \approx 1.59$; third prefractal generation, $|\phi_{15}^{\Omega_3}|$ associated with the eigenvalue $\lambda \approx 1.86$; fourth prefractal generation, $|\phi_{19}^{\Omega_4}|$ associated with the eigenvalue $\lambda \approx 2.71$~\cite{GRAFFIN-2025}.}
  \label{FigLocNum}
  \end{figure}


%


\section{Appendix: recall of different definitions}\label{Append}

In this section, we recall several standard definitions used in this article for the reader's convenience.
\begin{definition}[$(\eps,\delta)$-domain~\cite{JONES-1981}]\label{DefUnifD}
An open connected subset $\Omega$ of $\R^n$ is an $(\eps,\delta)$-domain, $\eps > 0$, $0 < \delta \leq \infty$, if whenever $(x, y) \in \Omega^2$ and $|x - y| < \delta$, there is a rectifiable arc $\gamma\subset \Omega$ with length $\ell(\gamma)$ joining $x$ to $y$ and satisfying
\begin{enumerate}
 \item[(i)] $\ell(\gamma)\le \frac{|x-y|}{\eps}$ and
 \item[(ii)] $d(z,\del \Omega)\ge \eps |x-z|\frac{|y-z|}{|x-y|}$ for $z\in \gamma$. 
\end{enumerate}
\end{definition}
The $(\eps,\delta)$-domains give the optimal class of  extension domains~\cite[Theorem~3]{JONES-1981} in $\R^2$, but not in $\R^3$, where there exist extension domains which are not $(\eps,\delta)$-domains. Bounded $(\eps,\infty)$-domains are also called \textit{uniform} domains. All $(\eps,\delta)$-domains are $n$-sets and satisfy the density measure condition, as mentioned in Section~\ref{SubSecspaceB}.
\begin{definition}[Ahlfors $d$-regular set or $d$-set~\cite{JONSSON-1984,JONSSON-1995,WALLIN-1991,TRIEBEL-1997}]\label{defdset}
Let $F$ be a Borel nonempty subset of $\R^n$. The set $F$ is is called a $d$-set ($0<d\le n$) if there exists a $d$-measure $\mu$ on $F$, $i.e.$ a positive Borel measure with support $F$ ($\operatorname{supp} \mu=F$) such that there exist constants 
$c_1$, $c_2>0$,
\begin{equation*}
 c_1r^d\le \mu(F\cap\overline{B_r(x)})\le c_2 r^d, \quad \hbox{ for } ~ \forall~x\in F,\; 0<r\le 1,
 \end{equation*}
where $B_r(x)\subset \R^n$ denotes the Euclidean ball centered at $x$ and of radius~$r$.
\end{definition}
 Thanks to~\cite[Prop.~1, p~30]{JONSSON-1984}, all $d$-measures on a fixed $d$-set $F$ are equivalent.  The restriction of $d$-dimensional Hausdorff measure $m_d$ to $F$, $m_d|_F$, is a $d$-measure on $F$ by~\cite[Thrm~1]{JONSSON-1984}. Thus,
 a $d$-set $F$ has Hausdorff dimension $d$ in the neighborhood of each point of $F$~\cite[p.33]{JONSSON-1984}. Definition~\ref{defdset} includes the case $d=n$, $i.e.$ $n$-sets.

Let us recall now the notion of $H^1$-capacity (\cite[Section 2.3]{CHEN-FUKUSHIMA-2012}, \cite[Section 2.1]{FOT94}  or \cite{ADAMS-1996, MAZ'JA-1985}):
\begin{definition}[Capacity]\label{DefCap}
The \emph{capacity} $\cpct(U)$ of an open set $U\subset\mathbb{R}^n$ is defined by
\[\cpct(U):=\inf\big\lbrace \|u\|_{H^1(\mathbb{R}^n)}^2: u\in H^1(\mathbb{R}^n),\ u\geq 1\ \text{a.e. on $U$}\big\rbrace\]
with the agreement that $\inf \emptyset=+\infty$. The capacity $\cpct(A)$ of a general set $A\subset \mathbb{R}^n$ is defined by
\[\cpct(A):=\inf\big\lbrace \cpct(U):\ A\subset U,\ \text{$U$ open}\big\rbrace.\]
\end{definition}
For $n=1$, all nonempty sets have positive capacity.

Let us also recall that an extended real valued function $v$ defined q.e. on $\mathbb{R}^n$ is \emph{quasi continuous} if for any $\varepsilon>0$ there is an open set $G\subset \mathbb{R}^n$ such that $\cpct(G)<\varepsilon$ and $v$ is continuous on $\mathbb{R}^n\setminus G$.

Recall that the \emph{Hausdorff distance} between two closed sets $K_1, K_2\subset \mathbb{R}^n$ is defined as 
\[d^H(K_1,K_2):=\inf\{\varepsilon>0: K_1\subset (K_2)_\varepsilon \text{ and } K_2\subset (K_1)_\varepsilon\},\]  
where it is used the notation $(K)_\varepsilon=\{x\in\mathbb{R}^n: d(x,K)\leq \varepsilon\}$.
\begin{definition}[Convergence in the Hausdorff sense, Definition~2.2.8~\cite{HENROT-e}]
A sequence $(K_m)_m$ of closed sets $K_m\subset \mathbb{R}^n$ is said to \emph{converge} to a closed set $K\subset \mathbb{R}^n$ \emph{in the Hausdorff sense} if $\lim_{m\to \infty} d^H(K_m,K)=0$.
A sequence $(\Omega_m)_m$ of open sets $\Omega_m\subset D$ is said to \emph{converge} to an open set $\Omega\subset D$ \emph{in the Hausdorff sense} if 
\[d^H(\overline{D}\setminus \Omega_m,\overline{D}\setminus \Omega)\to 0\quad \text{ as }\quad m\to \infty.\] 
\end{definition}
 This definition does not depend on the choice of $D$, \cite[Remark 2.2.11]{HENROT-e}, see also~\cite{shibahara2021gromov}. 

\begin{definition}[Convergence in the sense of characteristic functions, Definition 2.2.3~\cite{HENROT-e}]
A sequence $(\Omega_m)_m$ of open sets $\Omega\subset \mathbb{R}^n$ is said to \emph{converge} to an open $\Omega$ \emph{in the sense of characteristic functions} if 
\[\lim_{m\to \infty} \mathbf{1}_{\Omega_m}=\mathbf{1}_{\Omega}\quad \text{in $L^p_{\loc}(\mathbb{R}^n)$ for all $p\in [1,\infty)$},\]
 in other words, if locally, the Lebesgue measure of the symmetric differences of the domains converges to zero.   
\end{definition}
\begin{definition}[Convergence in the sense of compacts, Definition 2.2.21~\cite{HENROT-e}]
 A sequence $(\Omega_m)$ of open sets $\Omega_m\subset \mathbb{R}^n$ is said to \emph{converge} to an open set $\Omega\subset \mathbb{R}^n$ \emph{in the sense of compacts} if for any compact $K\subset \Omega$ we have $K\subset \Omega_m$ for all sufficiently large $m$ and for any compact $K\subset \mathbb{R}^n\setminus \overline{\Omega}$ we have $K\subset \mathbb{R}^n\setminus \overline{\Omega}_m$ for all sufficiently large $m$. 
 \end{definition}

\begin{definition}[Mosco convergence, Definition 2.1.1~\cite{MOSCO94}]\label{DefMC}
A sequence $(I_m)_m$ of quadratic functionals $I_m:L^2(D)\to [0,+\infty]$ \emph{converges to} a quadratic functional \emph{$I:L^2(D)\to [0,+\infty]$ in the sense of Mosco}, if 
\begin{enumerate}
	\item[(i)] we have $\varliminf_{m\to \infty} I_m(u_m)\ge I(u)$ for every seqence $(u_m)_{m}$ converging weakly to $u$ in $L^2(D)$, 
	\item[(ii)] for every $u\in L^2(D)$ there exists a sequence $(u_m)_{m}$ converging strongly in $L^2(D)$ such that $\varlimsup_{m\to \infty} I_m(u_m)\le I(u)$.
\end{enumerate}
\end{definition}
A sequence $(I_m)_m$ of quadratic functionals $I_m: L^2(D)\to [0,+\infty]$ \emph{converges to} a quadratic functional \emph{$I: L^2(D)\to [0,+\infty]$ in the Gamma-sense} if (ii) above holds and (i) above holds with weak convergence replaced by strong convergence, see \cite[Section 2]{BUCUR-2016}, \cite[Definition 2.2.1]{MOSCO94} or \cite{Braides,DALMASO-1993}. Convergence in the Mosco sense implies convergence in the Gamma sense (see~\cite{HINZ-2023}).

\bibliographystyle{plain} 
\bibliography{biblio.bib}
\end{document}